\newtheorem{theorem}{Theorem}
\newtheorem{lemma}{Lemma}[section]
\newtheorem{definition}{Definition}[section]
\newtheorem{remark}{Remark}[section]
\newtheorem{example}{Example}
\newcounter{Th-Alfa}
\newcommand{\CC}{\mathds{C}}
\newcommand{\NN}{\mathds{N}}
\newcommand{\PP}{\mathds{P}}
\newcommand{\RR}{\mathds{R}}
\newcommand{\ZZ}{\mathds{Z}}
\newcommand{\ZZp}{\mathds{Z}_+}
\newcommand{\dsty}{\displaystyle}
\newcommand{\supp}{\mathop{\rm supp}}
\newcommand{\dgr}[1]{\mbox{{\rm deg\/}}(#1)}
\newcommand{\inter}[1]{#1\strut^{\mathrm{o}}\!}
\newcommand{\ch}[1]{\mathbf{Co}\!\left(#1 \right)} 
\def\bbuildrel#1_#2^#3{\mathrel{
 \mathop{\kern 0pt#1}\limits_{#2}^{#3}}}
\def\bbbuildrel#1_#2{\mathrel{
 \mathop{\kern 0pt#1}\limits_{#2}}}
\newcommand{\funD}[4]{\begin{cases} #1 , & \hbox{if } \; #2;\\
										#3, & \hbox{if } \; #4 \end{cases}}
\def\bbuildrel#1_#2^#3{\mathrel{
 \mathop{\kern 0pt#1}\limits_{#2}^{#3}}}
\def\bbbuildrel#1_#2{\mathrel{
 \mathop{\kern 0pt#1}\limits_{#2}}}
\newcommand{\sob}{{\mathsf{s}}}
\newcommand{\Ip}[2]{\langle #1,#2\rangle}
\newcommand{\IpS}[2]{\Ip{#1}{#2}_{\sob}}
\title{\bf Differential properties of Jacobi-Sobolev polynomials and electrostatic interpretation}
\author[1]{H\'{e}ctor Pijeira-Cabrera\,\orcidlink{0000-0002-8465-0646}\thanks{hpijeira@math.uc3m.es}}
\author[1]{Javier Quintero-Roba\,\orcidlink{0000-0003-0536-3102}\thanks{jaquinte@math.uc3m.es}}
\author[2]{Juan Toribio-Milane\,\orcidlink{0000-0002-0782-1827} \thanks{ jtoribio34@uasd.edu.do}\thanks{{The research of J.T.M.  was   partially supported by   Fondo Nacional de Innovaci\'{o}n  y Desarrollo Cient\'{\i}fico y Tecnol\'{o}gico (FONDOCYT),  Dominican Republic, under grant   2020-2021-1D1-137.}}}
\affil[1]{Departamento de Matem\'{a}ticas, Universidad Carlos III de Madrid,  \linebreak  Legan\'{e}s 28911,  Madrid,  Spain.}
\affil[2]{Instituto de Matem\'{a}ticas, Facultad de Ciencias, Universidad Aut\'{o}noma de Santo Domingo, \linebreak Santo Domingo 10105, Dominican Republic.}
\date{}
\begin{document}
\maketitle

\begin{abstract}
We study the sequence of monic polynomials $\{S_n\}_{n\geqslant  0}$, orthogonal with respect to the Jacobi-Sobolev  inner {product}
\;$$
\langle  f,g\rangle_{\mathsf{s}}= \int_{-1}^{1}  f(x) g(x)\, d\mu^{\alpha,\beta}(x)+\sum_{j=1}^{N}\sum_{k=0}^{d_j}\lambda_{j,k} f^{(k)}(c_j)g^{(k)}(c_j),
$$  \;
where $N,d_j \in \ZZ_+$, $\lambda_{j,k}\geqslant  0$, $d\mu^{\alpha,\beta}(x)=(1-x)^{\alpha}(1+x)^{\beta} dx$, $\alpha,\beta>-1$, and $c_j\in\RR\setminus (-1,1)$. A connection formula that relates the Sobolev polynomials $S_n$ with the Jacobi polynomials is provided,  as well as the ladder differential operators for the sequence $\{S_n\}_{n\geqslant  0}$ and a second-order differential  equation with a polynomial coefficient that they satisfied. We give sufficient conditions under which the zeros of a wide class of Jacobi-Sobolev polynomials can be interpreted as the solution of an electrostatic equilibrium problem of $n$ unit charges moving in the presence of a logarithmic potential. Several examples are presented to illustrate this interpretation.

\bigskip

\noindent\textbf{Mathematics Subject Classification:} 30C15$\;\cdot\;$42C05$\;\cdot\;$33C45$\;\cdot\;$33C47$\;\cdot\;$82B23

\noindent\textbf{Keywords:} Jacobi polynomials $\cdot$ Sobolev orthogonality $\cdot$ second-order differential equation $\cdot$ electrostatic model

\end{abstract}


\section{Introduction}

It is  well known  that the classical orthogonal polynomials (i.e., Jacobi, Laguerre, and Hermite) satisfy a second-order differential equation with polynomial coefficients, and its zeros are simple. Based on these  facts,  Stieltjes  gave  a very interesting interpretation of the  zeros of the classical orthogonal polynomials as a  solution of an electrostatic equilibrium problem of $n$ movable unit charges in the presence of a logarithmic potential (see \cite[Sec. 3]{VanAss93}.
An excellent introduction to Stieltjes' result on this subject and its consequences can be found in~\cite[Sec. 3]{VanAss93} and \cite[Sec. 2]{ValAss95}. See also the survey \cite{MarMarMar07} and the introduction of  \cite{HMP-PAMS14,OrGa10}.

In order to make this paper self-contained, it is convenient to briefly recall the Jacobi, Laguerre, and Hermite cases. We begin with Jacobi. Let us consider   $n$ unit charges at the points $x_1, x_2, \ldots, x_n$  distributed in   $[-1,1]$ and  add two positive fixed charges of mass $(\alpha + 1)/2$ and $(\beta + 1)/2$ at $1$ and $-1$, respectively. If the charges repel each other according to the logarithmic potential law (i.e., the force is inversely proportional to the relative distance), then the total energy $E(\cdot)$ of this system is obtained by adding the energy of the mutual interaction between the charges. This is \vspace{6pt}
\begin{align}\label{JacobEnergy}
E\left(\omega_1, \omega_2, \ldots, \omega_n\right)=&\sum_{1 \leqslant i<j \leqslant n} \log \frac{1}{\left|\omega_i-\omega_j\right|} + \frac{\alpha + 1}{2} \sum_{j=1}^n \log \frac{1}{\left|1-\omega_j\right|} + \frac{\beta + 1}{2} \sum_{j=1}^n \log \frac{1}{\left|1+\omega_j\right|}.
\end{align}

The minimum of \eqref{JacobEnergy} gives the electrostatic equilibrium. The points $x_1, x_2, \ldots, x_n$ where the minimum is obtained are the places where the charges will settle down. It is obvious that, for the minimum, all the $x_j$ are distinct and different from $\pm 1$.

For a minimum, it is necessary that $\frac{\partial E_t}{\partial \omega_j}=0$ ($1\leqslant k \leqslant n$), from which it follows that the polynomial $P_n(x)=\prod_{j=1}^{n}(x-x_j)$ satisfies the differential equation
\begin{equation}\label{Jacobi-DiffEqn}
\left(1-x^2\right) P_n^{\prime \prime}(x)+\left(\beta-\alpha-(\alpha+\beta+2) x\right) P_n^{\prime}(x)=-n\left(n+\alpha+\beta+1\right) P_n(x),
\end{equation}
which is the differential equation for the monic Jacobi polynomial $P_n(x)=P_n^{\alpha,\beta}(x)$ (see~\cite{Szego75} (Theorems 4.2.2 and 4.21.6)). The proof of the uniqueness of the  minimum, based on the inequality between the arithmetic and geometric means, can be found in~\cite{Szego75} (Section 6.7). In conclusion, the global minimum of \eqref{JacobEnergy} is reached when each of the $n$ charges is located on a zero of the $n$th Jacobi polynomial  $P_n^{\alpha,\beta}(x)$.

For the other two families of classical orthogonal polynomials on the real line (i.e., Laguerre and Hermite),  Stieltjes also gave an electrostatic interpretation.  Since, in this situation, the free charges move in an unbounded set, they can escape to infinity. Stieltjes avoided this situation by constraining the first (Laguerre) or second (Hermite) moment of his zero-counting measures (see  \cite{Szego75} (Theorems 6.7.2 and  6.7.3) and \cite{VanAss93} (Section  3.2)).

The electrostatic interpretation of the zeros of the classical orthogonal polynomials, in addition to Stieltjes, was also studied by B\^{o}cher, Heine, and  Van Vleck, among others. These works were developed between the end of the 19th century and the beginning of the 20th century. After that, the subject remained dormant for almost a century, until it received new impulses from advances in logarithmic potential theory, the extensions of the notion of orthogonality, and the study of new classes of special functions.

{Let $\mu$ be a finite positive Borel measure with finite moments whose support \mbox{$\supp{\mu} \subset \RR$}} contains an infinite set of points. Assume that $\{P_n\}_{n\geqslant  0}$ denotes the monic orthogonal polynomial sequence with respect to the inner product
\begin{align}\label{Standard-IP}
 \Ip{f}{g}_{\mu}= & \int f(x) g(x) d\mu(x).
\end{align}

In general, {an inner product is referred to as``standard''  when the multiplication operator exhibits symmetry with respect to the inner product, i.e., $\langle x f,g\rangle_{\mu}=\langle  f,x g\rangle_{\mu}$. As~\eqref{Standard-IP}  is a  standard inner product, we have that $P_n$  has exactly $n$  simple zeros on  \mbox{$(a,b)=\inter{\ch{\supp{\mu}}}\subset \RR$}}, where $\ch{A}$ denotes the convex hull of a real set $A$ and $\inter{A}$ denotes the interior set of $A$. Furthermore, the sequence $\{P_n\}_{n\geqslant  0}$  satisfies the three-term recurrence relation
$$
   xP_{n}(x)=P_{n+1}(x)+\gamma_{1,n}P_{n}(x)+\gamma _{2,n}P_{n-1}(x); \quad  P_{0} (x)=1, \; P_{-1}(x)=0,
$$
where $\gamma_{2,n}={\|P_{n}\|_{\mu}}^2/{\|P_{n-1}\|_{\mu}}^2$ for $n\geqslant  1$,  $\gamma_{1,n}={\Ip{P_n}{xP_n}_{\mu} }/{\|P_{n}\|_{\mu}^2}$, and  $\|\cdot\|_{\mu}=\sqrt{\Ip{\cdot}{\cdot}_{\mu} }$ denotes the norm induced by \eqref{Standard-IP}.
  See \cite{Chi78,Freud71,Szego75} for these and other properties of $\{P_n\}_{n\geqslant  0}$.

  Let $(a,b)$ be as above, $N,d_j \in \ZZ_+$, $\lambda_{j,k} \geqslant  0$,  for $j=1,\dots, N$, $k=0,1,\dots,d_j$, $\{c_1,\dots,c_N\}\subset \RR \!\setminus\!(a,b)$, where $c_i \neq c_j$ if $i \neq j$ and $I_+=\{(j,k): \lambda_{j,k}>0\}$.  We consider the following   Sobolev-type inner product: \vspace{6pt}
\begin{align}\nonumber
	\IpS{f}{g}&= \Ip{f}{g}_{\mu}+\sum_{j=1}^{N}\sum_{k=0}^{d_j}\lambda_{j,k} f^{(k)}(c_j)g^{(k)}(c_j)\\ \label{GeneralSIP}
	&=\!\int\!f(x) g(x)d\mu(x)+\sum_{(j,k)\in I_+}\!\!\!\!\lambda_{j,k} f^{(k)}(c_j)g^{(k)}(c_j),
\end{align}	
where $f^{(k)}$ denotes the $k$th derivative of the function $f$.  We also assume, without restriction of generality, that  $\{(j,d_j)\}_{j=1}^N\subset I_+$ and $d_1\leqslant d_2\leqslant \cdots \leqslant d_N$.
 Let us denote by  $S_n$ $(n \in \ZZp)$ the lowest degree monic polynomial that satisfies
\begin{equation}\label{Sobolev-Orth}
\IpS{x^k}{S_n}= 0, \quad  \text{for } \; k=0,1,\dots,n-1.
\end{equation}

{Henceforth}, we refer to the sequence  $\left\{S_n\right\}_{n\geqslant 0}$ of monic polynomials as the  system of monic Sobolev-type orthogonal polynomials. It is not difficult to see that for all  $n\geqslant  0$, there exists  a unique polynomial $S_n$ of the degree $n$. Note that the coefficients of $S_n$ are the solution of a homogeneous linear system \eqref{Sobolev-Orth} of  $n+1$ unknowns and $n$ equations. The uniqueness is a consequence of the required minimality on the degree. For more details on this type of nonstandard orthogonality, we refer  the reader to  \cite{MaXu15,And01}.

It is not difficult to see that, in general, \eqref{GeneralSIP} is nonstandard, i.e., $\IpS{xp}{q}\neq\IpS{p}{xq}$.
 The properties of orthogonal polynomials concerning standard inner products are distinct from those of Sobolev-type polynomials. For instance, the roots of Sobolev-type polynomials either can be complex or, if real, might lie beyond the convex hull of the measure $\mu$ support, as demonstrated in the following example:
\begin{example} Let
 \begin{align*}
   \IpS{f}{g}= & \int_{-1}^{1} f(x) g(x)dx+f'(-2)g'(-2)+f'(2)g'(2),
 \end{align*}
 then the corresponding third-degree monic Sobolev-type orthogonal polynomial is $S_3(z)=z^3-\frac{183}{20} z$,  whose zeros are $0$ and $\pm \sqrt{\frac{183}{20}} $. Note that $\pm \sqrt{\frac{183}{20}}  \approx \pm 3 \not \in [-2,2]$.
\end{example}

We will denote by $\PP$ the linear space of all polynomials and by $\dgr{p}$ the degree of $p\in \PP$. Let
\begin{align*}\label{Mod-InnerP}
\widehat{\rho}(x)= &  \prod_{c_j\leqslant a}^{} \left( x-c_j\right)^{d_j+1}\!\!\prod_{c_j\geqslant  b}^{} \left( c_j-x\right)^{d_j+1}  \quad
    \text{ and } \quad  d\mu_{\widehat{\rho}}(x)= \widehat{\rho}(x)d\mu(x).
\end{align*}
{Note that} $\widehat{\rho}(x)>0$ for all  $x \in (a,b)$ and $\dgr{\widehat{\rho}}=d=\sum_{j=1}^N(d_j+1)$.  Additionally, for $n >d$, from~\eqref{Sobolev-Orth}, we have that $\{S_n\}$  satisfies the following quasi-orthogonality relations:
\begin{equation*}
\Ip{S_n}{f}_{\mu_{\widehat{\rho}}}  = \Ip{S_n}{{\widehat{\rho}} f}_{\mu} = \int S_n(x) f(x) {\widehat{\rho}}(x) d\mu(x) = \IpS{S_n}{{\widehat{\rho}} f}=0 ,
\end{equation*}
for $f\in \PP_{n-d-1}$, where $\PP_n$ denotes the linear space of polynomials with real coefficients and degree less than or equal to $n\in \ZZp$. Thus, $S_n$   is a quasi-orthogonal of order  $d$  with respect to the modified measure  $\mu_{\widehat{\rho}}$. Therefore,  $S_n$ has at least $(n-d)$ changes of sign in $(a,b)$.

Taking into account the known results for  measures  of  bounded support (see  \cite{LopMarVan95} (1.10)),  the number of zeros located in the interior of the support of the measure is closely related to $d^*=\#(I_+)$, where the symbol $\#(A)$ denotes  the cardinality of a given set $A$. Note that  $d^*$ is  the number of terms in the discrete part of $\IpS{\cdot}{\cdot}$ ( i.e., $\lambda_{j,k}>0$).

From  Section \ref{LadderOp}  onward,  we will restrict our attention  to the case when in \eqref{GeneralSIP} the measure $d\mu$ is the Jacobi measure $d\mu^{\alpha,\beta}(x)=(1-x)^{\alpha}(1+x)^{\beta} dx$ ($\alpha,\beta>-1$) on  $[-1,1]$. Some of the results we obtain are generalizations of previous work, with  derivatives up to order one. For more details, we refer the reader to \cite{ArAlMa98,DuGa13} and the references therein.

The aim of this paper is   to give an electrostatic interpretation for the distribution of zeros of  a wide class of Jacobi-Sobolev  polynomials, following an approach based on the works \cite{HMP-PAMS14,Ism00-A,Ism00-B} and the original ideas of Stieltjes in \cite{Sti-1885-1,Sti-1885-2}.

In the next section, we obtain a formula that allows us to express the polynomial $S_n$  as a linear combination of $P_{n}$ and $P_{n-1}$, whose coefficients are rational functions.  We refer to this formula as  ``connection formula''. Sections \ref{LadderOp} and \ref{Sec-DiffEqn} deal  with   the ladder (raising and lowering) equations and operators of $\{S_{n}\}_{n\geqslant  0}$. We combine the ladder (raising and lowering) operators to prove that the sequence  of monic polynomials  $\{\widehat{S}_{n}(x)\}_{n\geqslant  0}$ satisfies the second-order linear differential Equation \eqref{DifEq-PolyCoef}, with polynomial coefficients.

In the last section, we give a sufficient condition for  an electrostatic interpretation of the distribution of the zeros of $\{\widehat{S}_{n}(x)\}_{n\geqslant  0}$ as the logarithmic potential interaction of unit positive charges in the presence of an external field. Several examples are given to illustrate whether or not this condition is satisfied.

\section{Connection Formula}\label{ConecFormula}

 {Let $\mu$ be a finite positive Borel measure with finite moments,  whose support \mbox{$\supp{\mu} \subset \RR$}} contains an infinite set of points.  Assume that  $\{P_n\}_{n\geqslant  0}$ denotes the monic orthogonal polynomial sequence with respect to the inner product \eqref{Standard-IP}. We first recall the well-known Christoffel-Darboux formula for $K_{n}(x,y)$, the kernel polynomials associated with  $\{P_n\}_{n\geqslant  0}$.
 \begin{equation}\label{Kernel-n}
K_{n-1}(x,y)=\sum_{k=0}^{n-1}\frac{P_{k}(x)P_{k}(y)}{\left\|
P_{k}\right\|_{\mu} ^{2}}= \funD{\dsty \frac{P_{n}(x)P_{n-1}(y)-P_{n}(y)P_{n-1}(x)}{\|P_{n-1}\|_{\mu}^{2}\, (x-y)}}{x\neq y}{\dsty \frac{P_{n}^{\prime}(x)P_{n-1}(x)-P_{n}(x)P_{n-1}^{\prime}(x)}{\|P_{n-1}\|_{\mu}^{2}}}{x=y.}
\end{equation}

  We denote by  $\dsty K_{n}^{(j,k)}\left( x,y\right) =\frac{\partial ^{j+k}K_{n}\left( x,y\right) }{\partial x ^{j}\partial y^{k}}$ the partial derivatives of  the kernel \eqref{Kernel-n}. Then, from the Christoffel-Darboux formula \eqref{Kernel-n} and the Leibniz rule, it is not difficult to verify~\mbox{that}
\begin{align}\nonumber
K_{n-1}^{(0,k)}(x,y)=& \sum_{i=0}^{n-1}\frac{P_{i}(x)P_i^{(k)}(y)}{\left\| P_{i }\right\| _{\mu }^{2}} \\ =& \frac{k!\left(
Q_{k}(x,y;P_{n-1})P_{n}(x)-Q_{k}(x,y;P_{n})P_{n-1}(x)\right)}{\left\| P_{n-1}\right\|
_{\mu }^{2}\;(x-y)^{k+1}}
, \label{CDformuladj}
\end{align}
where $Q_{k}(x,y;f)=\sum_{\nu=0}^{k} \frac{f^{(\nu)}(y)}{\nu !}(x- y)^\nu$   is  the Taylor polynomial of the degree $k$ of $f$ centered at $y$. Observe that \eqref{CDformuladj} becomes the usual Christoffel-Darboux formula \eqref{Kernel-n} if  $k=0$.

From \eqref{GeneralSIP}, if $i<n$
\begin{equation}\label{FourierCoeff}
  \Ip{S_n}{P_i}_{\mu }=  \IpS{S_n}{P_i}-\sum_{(j,k)\in I_+}\lambda_{j,k} S_n^{(k)}(c_j)P_i^{(k)}(c_j)=-\sum_{(j,k)\in I_+}\lambda_{j,k} S_n^{(k)}(c_j)P_i^{(k)}(c_j).
\end{equation}

Therefore, from the Fourier expansion of $S_{n}$ in terms of the basis $\left\{ P_{n}\right\} _{n\geqslant 0}$ and using \eqref{FourierCoeff}, we~obtain \vspace{-6pt}
\begin{align}
S_{n}(x)&=P_n(x)+\sum_{i=0}^{n-1}\Ip{S_n}{P_i}_{\mu }\frac{P_{i}(x)}{\left\| P_{i }\right\| _{\mu }^{2}}= P_n(x)-\sum_{(j,k)\in I_+}\lambda_{j,k} S_n^{(k)}(c_j)\sum_{i=0}^{n-1}\frac{P_{i}(x)P_i^{(k)}(c_j)}{\left\| P_{i }\right\| _{\mu }^{2}} \nonumber
\\
&=P_{n}(x)-\sum_{(j,k)\in I_+}\lambda_{j,k} S_{n}^{(k)}(c_{j})K_{n-1}^{(0,k)}(x,c_j).\label{FConexFINAL}
\end{align}

Now, replacing \eqref{CDformuladj} in \eqref{FConexFINAL}, we have  the  connection formula
\begin{align}\label{FConexPPAL}
S_{n}(x)& = F_{1,n}(x)P_{n}(x) + G_{1,n}(x)P_{n-1}(x), \\ \nonumber
\text{where} \quad  F_{1,n}(x) & = 1-\sum_{(j,k)\in I_+} \frac{\lambda _{j,k}k!\,S_{n}^{(k)}(c_{j})}{\left\| P_{n-1}\right\|_{\mu}^{2}}\frac{Q_{k}(x,c_{j};P_{n-1})}{(x-c_{j})^{k+1}} \\ \nonumber
\text{and} \quad  G_{1,n}(x) =&\sum_{(j,k)\in I_+} \frac{\lambda _{j,k}k!\,S_{n}^{(k)}(c_{j})}{\left\| P_{n-1}\right\|_{\mu}^{2}}\frac{Q_{k}(x,c_{j};P_{n})}{(x-c_{j})^{k+1}}.
\end{align}

Deriving Equation~\eqref{FConexFINAL} $\ell$-times and evaluating then at $x=c_{i}$ for each ordered pair $(i,\ell)\in I_+$, we obtain the following system of $d^*=\#(I_+)$ linear equations and $d^*$ unknowns $S^{(k)}_n(c_{j})$.
\begin{align}\label{eqsystem1}
P_{n}^{(\ell)}(c_{i})=\left(1+\lambda_{i,\ell}K_{n-1}^{(\ell,\ell)}(c_i,c_i)\right)S^{(\ell)}_n(c_{i})\ +\!\!\!\sum_{\substack{(j,k)\in I_+\\ (j,k)\neq (i,\ell)}}\!\!\!\!\!\lambda_{j,k} K_{n-1}^{(\ell,k)}(c_{i},c_j)S^{(k)}_n(c_{j}).
\end{align}

The remainder of this section is devoted to proving that system \eqref{eqsystem1} has a unique solution. The following  lemma  is essential to achieve this goal.

\begin{lemma}\label{LemmaFullRank}
Let $I\subset \mathbb{R}\times \mathbb{Z}_+$ be a (finite) set of $d^*$ pairs. Denote $\{c_j\}_{j=1}^N=\pi_1(I)$ where $\pi_1$ is the projection function over the first coordinate, i.e., $\pi_1(x,y)=x$, $d_j=\max\{\nu_i: (c_j,\nu_i)\in I\}$ and $d=\sum_{j=1}^N (d_j+1)$. Let $P_k$ be an arbitrary polynomial of the degree $k$ for $0\leqslant k\leqslant n-1$. Then,  for all $n\geqslant  d$, the $ d^*\!\times \!n$ matrix
\begin{align*}
\mathcal{A}^*=\left(P^{(\nu)}_{k-1}(c)\right)_{(c,\nu)\in I,k=1,2,\dots,n}
\end{align*}
has a full rank $d^*$.
\end{lemma}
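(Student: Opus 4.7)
The plan is to prove linear independence of the rows of $\mathcal{A}^*$ by invoking Hermite interpolation. Suppose for contradiction that the rows are linearly dependent; then there exist scalars $\{a_{c,\nu}\}_{(c,\nu)\in I}$, not all zero, such that for every $k=1,2,\dots,n$,
\begin{equation*}
\sum_{(c,\nu)\in I} a_{c,\nu}\, P_{k-1}^{(\nu)}(c) = 0.
\end{equation*}
Define the linear functional $L:\PP \to \RR$ by $L(q) = \sum_{(c,\nu)\in I} a_{c,\nu}\, q^{(\nu)}(c)$. The displayed identity says exactly that $L(P_{k-1})=0$ for $k=1,\dots,n$. Since $\deg P_{k-1}=k-1$, the polynomials $\{P_0,P_1,\dots,P_{n-1}\}$ form a basis of $\PP_{n-1}$, so $L$ annihilates all of $\PP_{n-1}$.

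Next I would exploit the hypothesis $n\geqslant d$ via Hermite interpolation on the nodes $\{c_1,\dots,c_N\}$ with multiplicities $d_j+1$. Since $\sum_{j=1}^N (d_j+1)=d$, the classical Hermite interpolation problem is well-posed in $\PP_{d-1}$: for any prescribed values of $q^{(\nu)}(c_j)$ with $0\leqslant \nu \leqslant d_j$ and $1\leqslant j\leqslant N$, there exists a unique $q\in\PP_{d-1}$ realizing them. In particular, for each fixed pair $(c_i,\ell)\in I$, I would take $q_{i,\ell}\in\PP_{d-1}$ to be the Hermite interpolant determined by
\begin{equation*}
q_{i,\ell}^{(\nu)}(c_j) = \begin{cases} 1, & (j,\nu)=(i,\ell),\\ 0, & (j,\nu)\neq (i,\ell),\ 0\leqslant \nu\leqslant d_j,\ 1\leqslant j\leqslant N.\end{cases}
\end{equation*}
Since $n\geqslant d$, each such $q_{i,\ell}$ lies in $\PP_{d-1}\subseteq \PP_{n-1}$.

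Applying $L$ to $q_{i,\ell}$ and using the Kronecker-delta conditions above (which in particular hold for all $(c_j,\nu)\in I$, as $\nu\leqslant d_j$ by the definition of $d_j$), only the term indexed by $(c_i,\ell)$ survives, giving $L(q_{i,\ell})=a_{i,\ell}$. On the other hand, $q_{i,\ell}\in\PP_{n-1}$ and $L\equiv 0$ on $\PP_{n-1}$, so $a_{i,\ell}=0$ for every $(c_i,\ell)\in I$, contradicting the assumed nontriviality of $\{a_{c,\nu}\}$. The rows are therefore linearly independent and $\mathrm{rank}(\mathcal{A}^*)=d^*$.

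The only delicate point is the solvability of the Hermite interpolation problem in $\PP_{d-1}$, which is standard; the hypothesis $n\geqslant d$ is used precisely to ensure that the interpolants $q_{i,\ell}$ are captured by the basis $\{P_0,\dots,P_{n-1}\}$, so that the vanishing of $L$ on this basis transfers to vanishing on each $q_{i,\ell}$. No properties of $\mu$, orthogonality, or the specific polynomials $P_{k-1}$ beyond their degrees are needed.
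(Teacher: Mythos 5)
Your proof is correct, but it takes a genuinely different route from the paper's. The paper argues at the level of matrices: it first reduces to the monomial case $P_k(x)=x^k$ by column operations, embeds $\mathcal{A}^*$ (for $n=d$) as a selection of rows of the full $d\times d$ confluent Vandermonde matrix built from all derivatives of orders $0,\dots,d_j$ at each node $c_j$, and then evaluates that determinant explicitly by Krattenthaler's formula, concluding nonvanishing because the $c_j$ are distinct. You instead dualize: a row dependency is a nontrivial functional $L(q)=\sum a_{c,\nu}q^{(\nu)}(c)$ annihilating the basis $\{P_0,\dots,P_{n-1}\}$ of $\PP_{n-1}$, hence all of $\PP_{n-1}\supseteq \PP_{d-1}$, and testing $L$ against the Hermite cardinal interpolants recovers each coefficient $a_{c_i,\ell}$ as zero. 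Both arguments ultimately rest on the same underlying fact --- well-posedness of confluent (Hermite) interpolation at distinct nodes, which is exactly the nonvanishing of the confluent Vandermonde determinant --- but your packaging needs only the \emph{existence} of the cardinal interpolants (which follows from the elementary uniqueness argument: a polynomial in $\PP_{d-1}$ vanishing to order $d_j+1$ at each $c_j$ is divisible by a degree-$d$ polynomial, hence zero), and it dispenses with both the reduction to monomials and the explicit determinant formula, since the basis property of $\{P_0,\dots,P_{n-1}\}$ is all that is used of the $P_k$. What the paper's computation buys in exchange is the explicit value of $\det(\mathcal{A})$; what yours buys is a shorter, more self-contained argument. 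Your handling of the one delicate point is also right: every pair $(c_j,\nu)\in I$ satisfies $\nu\leqslant d_j$ by the definition of $d_j$ as a maximum, so the Kronecker-delta conditions imposed on $q_{i,\ell}$ do cover every term of $L$.
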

\begin{proof}
First, note that, using elementary column transformations, we can reduce the proof to the case when $P_k(x)=x^k$, for $k=0,1,\dots,n-1$. On the other hand, $d^*_j=\#(\{\nu_i: (c_j,\nu_i)\in I\})\leqslant d_j+1$ for $j=1,2,\dots,N$, so $d^*=\sum_{j=1}^Nd_j^*\leqslant d\leqslant n$, and it is sufficient to prove the case $n=d$. Consider the $m\!\times\! n$ matrix
\begin{align*}
\mathcal{A}_{m}(x)=\left(\begin{array}{rrrrrr}
1 & x & x^2 & x^3  &\cdots & x^{n-1}\\
0 & 1 & 2x & 3x^2  &\cdots & (n-1)x^{n-2} \\
0 & 0 & 2 & 6 x  &\cdots & (n-1)(n-2)x^{n-3} \\
\vdots & \vdots &\vdots & \ddots &\vdots & \vdots\\
0 & 0 &0 & 0 &\cdots & (n-1)\cdots(n-m+1)x^{n-m}\\
\end{array}\right),
\end{align*}
where $m\leqslant n$.  Without loss of generality, we can rearrange the rows of $\mathcal{A}^*$ such that
\begin{align*}
\mathcal{A}^*=
\begin{bmatrix}
    \begin{array}{c}
        \mathcal{A}_1^* \\
        \hline
        \mathcal{A}_2^* \\
        \hline
        \vdots\\
        \hline
        \mathcal{A}_N^* \\
    \end{array}
\end{bmatrix}, \quad  \text{where} \quad   \mathcal{A}^*_j=\left(P^{(\nu)}_{k-1}(c_j)\right)_{(c_j,\nu)\in I,k=1,2,\dots,n}.
\end{align*}

 Note that $\mathcal{A}^*_j$ is obtained by taking some rows from $\mathcal{A}_{d_j+1}(c_j)$, the rows $\nu$, such that $(c_j,\nu-1)\in I$.
 Consider the matrix \vspace{6pt}
\begin{align*}
\mathcal{A}=
 \begin{bmatrix}
    \begin{array}{c}
        \mathcal{A}_{d_1+1}(c_1) \\
        \hline
        \mathcal{A}_{d_2+1}(c_2) \\
        \hline
        \vdots\\
        \hline
        \mathcal{A}_{d_N+1}(c_N) \\
    \end{array}
\end{bmatrix}.
\end{align*}
 From \cite{Kratt99} (Theorem 20), we compute  $det(\mathcal{A})$ as
\begin{align*}
\det(\mathcal{A})=\det(\mathcal{A}^\intercal)=\prod_{j=1}^N \prod_{i=1}^{d_j}i!\prod_{1\leqslant j_1<j_2 \leqslant N}(c_{j_1}-c_{j_2})^{(d_{j_1}+1)(d_{j_2}+1)}\neq 0.
\end{align*}
Then the $n$ row vectors of $\mathcal{A}$ are linearly independent, and consequently, the $d^*$ rows of $\mathcal{A}^*$ are also linearly independent.
\end{proof}

Now we can rewrite  \eqref{eqsystem1} in the matrix form
\begin{align}\label{solve1}
\mathcal{P}_{n}(\mathcal{C})=(\mathcal{I}_{d^{*}}+\mathcal{K}_{n-1}(\mathcal{C},\mathcal{C})\mathcal{L})\mathcal{S}_{n}(\mathcal{C}), \quad \text{where}
\end{align}

\begin{description}
  \item[$\mathcal{I}_{d^{*}}$] is the identity matrix of the order $d^*$.
    \item[$\mathcal{L}$ ]  is the $d^*\!\!\times\! d^*$-diagonal matrix with the diagonal entries $\lambda_{j,k}$, $(j,k)\in I_+ $.
  \item[$\mathcal{C}$] is the column vector $\dsty \mathcal{C}=(\underbrace{c_{1},\dots,c_1}_{d^*_1 \text{-times}},\underbrace{c_2,\dots,c_2}_{d^*_2 \text{-times}},\dots,
\underbrace{c_N,\dots,c_N}_{d^*_N \text{-times}})^\intercal$.
  \item[$\mathcal{P}_{n}(\mathcal{C})$] \!and $\mathcal{S}_{n}(\mathcal{C})$  are column vectors with the entries $P^{(k)}_n(c_j)$, and $S^{(k)}_n(c_j)$, $(j,k)\in I_+$ respectively.
  \item[$\mathcal{K}_{n-1}(\mathcal{C},\mathcal{C})$]  is a $d^*\times d^*$ matrix whose entry associated to the $(i,\ell)$th row and the $(j,k)$th column, $(i,\ell),(j,k)\in I_+$, is \;
$\dsty
K_{n-1}^{(\ell,k)}(c_{i},c_{j})=\sum_{\nu=0}^{n-1}\frac{P^{(\ell)}_{\nu}(c_i)P_{\nu}^{(k)}(c_j)}{\left\|
P_{\nu}\right\|_{\mu} ^{2}}.$
\end{description}

Clearly,  we can write $\dsty \mathcal{K}_{n-1}(\mathcal{C},\mathcal{C})=\mathcal{F}\mathcal{F}^\intercal,$ where $\mathcal{F}=\left(\frac{P^{(k)}_{\nu-1}(c_j)}{\|P_{\nu-1}\|_\mu}\right)_{(j,k)\in I_+, \nu=1,\dots,n,}$ is a matrix  of the order $d^*\!\!\times \! n$ and  full rank  for all $n\geqslant  d$, according to   Lemma \ref{LemmaFullRank}.

Then the matrix $\mathcal{K}_{n-1}(\mathcal{C},\mathcal{C})$ is a $d^*\!\!\times\! d^*$ positive definite matrix for all $n\geqslant  d$; see~\cite{Horn90} (Theorem 7.2.7(c)). Since $\mathcal{L}$ is a diagonal matrix with positives entries, it follows that $\mathcal{L}^{-1}+\mathcal{K}_{n-1}(\mathcal{C},\mathcal{C})$ is also a positive definite matrix, and consequently, $\mathcal{I}_{d^{*}}+\mathcal{K}_{n-1}(\mathcal{C},\mathcal{C})\mathcal{L}=\left(\mathcal{L}^{-1}+\mathcal{K}_{n-1}(\mathcal{C},\mathcal{C})\right)\!\mathcal{L}$ is nonsingular. Then the linear system  \eqref{solve1} has the unique solution
\begin{align}\label{Solut1}
\mathcal{S}_{n}(\mathcal{C})=(\mathcal{I}_{d^{*}}+\mathcal{K}_{n-1}(\mathcal{C},\mathcal{C})\mathcal{L})^{-1}\mathcal{P}_{n}(\mathcal{C}).
\end{align}

Using this notation, we can rewrite  \eqref{FConexFINAL} in the compact form
\begin{equation}\label{formula1}
S_{n}(x)=P_{n}(x)-\mathcal{K}_{n-1}(x,\mathcal{C})\,\mathcal{L} \,\mathcal{S}_{n}(\mathcal{C}),
\end{equation}
where $\mathcal{K}_{n-1}(x,\mathcal{C})$ is a row vector with the entries $K_{n-1}^{(0,k)}(x,c_{j})$, for $(j,k)\in I_+$.
Now, replacing~\eqref{Solut1} into~\eqref{formula1}, we obtain the matrix version of the connection formula \eqref{FConexPPAL}
\begin{equation*}
S_{n}(x)=P_{n}(x)-\mathcal{K}_{n-1}(x,\mathcal{C})\mathcal{L}(\mathcal{I}_{d^{*}}+\mathcal{K}_{n-1}(\mathcal{C},\mathcal{C})\mathcal{L})^{-1}\mathcal{P}_{n}(\mathcal{C}).
\end{equation*}


\section{Ladder Equations  for Jacobi-Sobolev Polynomials}\label{LadderOp}

Henceforth,  we will restrict our attention to  the Jacobi-Sobolev case. Therefore, we consider in the inner product  \eqref{GeneralSIP} the measure $d\mu(x)=d\mu^{\alpha,\beta}(x)=(1-x)^{\alpha}(1+x)^{\beta} dx$, where $\alpha,\beta>-1$ and whose support is $[-1,1]$. To simplify  the notation, we will continue to write  $S_n$ instead of $S^{\alpha,\beta}_n$ to denote the corresponding $n$th Jacobi-Sobolev monic polynomial.   In the following, we omit the parameters $\alpha$ and $\beta$ when no confusion arises.

From \cite{Szego75} ((4.1.1), (4.3.2), (4.3.3), (4.5.1), and (4.21.6)), for the monic Jacobi polynomials, we have
 \begin{align}\nonumber
P_n^{\alpha, \beta}(x)=& \binom{2 n+\alpha+\beta}{n}^{-1} \sum_{\nu=0}^n\binom{n+\alpha}{n-\nu}\binom{n+\beta}{\nu}\left(x-1\right)^\nu\left(x+1\right)^{n-\nu}.\\ \nonumber
h^{\alpha,\beta}_n=  \left\|P^{\alpha,\beta}_{n}\right\|_{\mu^{\alpha,\beta}}^2 = & 2^{2n+\alpha+\beta+1}\frac{\Gamma(n+1)\Gamma(n+\alpha+1)\Gamma(n+\beta+1)\Gamma(n+\alpha+\beta+1)}
{\Gamma(2n+\alpha+\beta+2)\Gamma(2n+\alpha+\beta+1)}.\\ \nonumber
P_n^{ \alpha, \beta }(1)=& \frac{2^n \Gamma(n+\alpha+1) \Gamma(n+\alpha+\beta+1)}{\Gamma(\alpha+1) \Gamma(2 n+\alpha+\beta+1)} .\\
 xP^{\alpha,\beta}_{n}(x)=&P^{\alpha,\beta}_{n+1}(x)+\gamma_{1,n}P^{\alpha,\beta}_{n}(x)+\gamma _{2,n}P^{\alpha,\beta}_{n-1}(x); \;  P^{\alpha,\beta}_{0} (x)=1, \; P^{\alpha,\beta}_{-1}(x)=0, \label{Jacob3TRR}
\end{align}
where
\begin{equation}\label{3TRR-CoefJacob}
\begin{aligned}
 \gamma_{1,n}=& \gamma^{\alpha,\beta}_{1,n}= \frac{\beta^2-\alpha^2}{(2n+\alpha+\beta)(2n+\alpha+\beta+2)},\\
\gamma_{2,n}= &\gamma^{\alpha,\beta}_{2,n}= \frac{4n (n+\alpha)(n+\beta) (n+\alpha+\beta)}{(2 n+\alpha+\beta)^2((2 n+\alpha+\beta)^2-1)}.
\end{aligned}
\end{equation}

Let $\mathfrak{I}$ be  the identity operator. We define the two ladder Jacobi differential operators on $\PP$ as
\begin{align*}
\widehat{\mathfrak{L}}_{n}^{\downarrow} & :=-\frac{\widehat{a}_{n}(x)}{\widehat{b}_{n}}\; \mathfrak{I}+\frac{1-x^2}{\widehat{b}_{n}} \;\frac{d}{dx}\quad  \text{(lowering Jacobi differential operator)},\\
 \widehat{\mathfrak{L}}_{n}^{\uparrow} &:= -\frac{\widehat{c}_{n}(x)}{\widehat{d}_{n}}\; \mathfrak{I}+\frac{1-x^2}{\widehat{d}_{n}} \; \frac{d}{dx} \quad \text{(raising Jacobi differential operator)} .
\end{align*}
where
\begin{equation}\label{coeffSzego01}
\begin{aligned}
\widehat{a}_{n}(x)=&-\frac{n ((2n+\alpha+\beta)x+\beta-\alpha)}{2n+\alpha+\beta}, \quad
\widehat{b}_{n}= \frac{4n (n+\alpha)(n+\beta)(n+\alpha+\beta)}{(2 n+\alpha+\beta)^2(2 n+\alpha+\beta-1)},\\
\widehat{c}_{n}(x)=&\frac{(n+\alpha+\beta)((2 n+\alpha+\beta) x+\alpha-\beta)}{2n+\alpha+\beta} \quad \text{and} \quad
\widehat{d}_{n} = -(2n+\alpha+\beta-1).
\end{aligned}
\end{equation}

From \cite{Szego75} (4.5.7 and 4.21.6), if  $n\geqslant  1$, the sequence $\left\{P^{\alpha,\beta}_{n}\right\}_{n\geqslant  0}$  satisfies the  relations
\begin{equation}\label{StrcRelPn}
\begin{aligned}
\widehat{\mathfrak{L}}_{n}^{\downarrow}  \left[P^{\alpha,\beta}_{n}(x)\right] & = -\frac{\widehat{a}_{n}(x)}{\widehat{b}_{n}}P^{\alpha,\beta}_{n}(x)+\frac{1-x^2}{\widehat{b}_{n}} \left(P^{\alpha,\beta}_{n}(x)\right)^{\prime} = P^{\alpha,\beta}_{n-1}(x),\\
\widehat{\mathfrak{L}}_{n}^{\uparrow}  \left[P^{\alpha,\beta}_{n-1}(x)\right] & = -\frac{\widehat{c}_{n}(x)}{\widehat{d}_{n}}P^{\alpha,\beta}_{n-1}(x)+\frac{1-x^2}{\widehat{d}_{n}} \left(P^{\alpha,\beta}_{n-1}(x)\right)^{\prime}= P^{\alpha,\beta}_{n}(x).
\end{aligned}
\end{equation}

In this case, the connection Formula \eqref{FConexPPAL} becomes
\begin{align}\label{FConexJ-S}
  S_{n}(x)=& A_{1,n}(x) \; P^{\alpha,\beta}_{n}(x)+ B_{1,n}(x)\; P^{\alpha,\beta}_{n-1}(x),\\ \nonumber
  \text{where} \quad  A_{1,n}(x) =&  A^{\alpha,\beta}_{1,n}(x)=   1-\sum_{(j,k)\in I_+} \frac{\lambda _{j,k}k!\,S_{n}^{(k)}(c_{j})}{h^{\alpha,\beta}_{n-1}}\frac{Q_{k}(x,c_{j};P^{\alpha,\beta}_{n-1})}{(x-c_{j})^{k+1}}\\
\nonumber
  \text{and} \quad  B_{1,n}(x) =&  B^{\alpha,\beta}_{1,n}(x)=   \sum_{(j,k)\in I_+} \frac{\lambda _{j,k}k!\,S_{n}^{(k)}(c_{j})}{h^{\alpha,\beta}_{n-1}}\frac{Q_{k}(x,c_{j};P^{\alpha,\beta}_{n})}{(x-c_{j})^{k+1}}.
\end{align}

Let  $\dsty \rho(x)=\prod_{j=1}^{N} \left( x-c_j\right)^{d_j+1}$   and define the $\left(d-k-1\right) $th degree polynomial%
\begin{equation}\label{rhoMk}
\rho _{j,k}(x):=\frac{\rho(x)}{(x-c_j)^{k+1}}=(x-c_{j})^{d_j-k}\prod_{\substack{i=1 \\ i\neq j}}^N(x-c_{i})^{d_i+1},
\end{equation}
for every $(j,k)\in I_+$.  The following four lemmas are essential for defining ladder operators (lowering and raising operators).



\begin{lemma}
\label{lemma41} For the sequences of polynomials $\{S_{n}\}_{n\geqslant  0}$ and $\{P^{\alpha,\beta}_{n}\}_{n\geqslant  0}$, we obtain%
\begin{eqnarray}
\rho(x)S_{n}(x) &=& A_{2,n}(x) \;  P^{\alpha,\beta}_{n}(x)+B_{2,n}(x) \;P^{\alpha,\beta}_{n-1}(x),
\label{LemConnForm1} \\
\left(1-x^2\right)\left( \rho (x)S_{n}(x)\right) ^{\prime }
&=&A_{3,n}(x)  P^{\alpha,\beta}_{n}(x)+B_{3,n}(x)  P^{\alpha,\beta}_{n-1}(x),  \label{LemConnForm1-Dx}
\end{eqnarray}%
where%
\begin{align*}
A_{2,n}(x)  =&\rho(x)A_{1,n}(x)   =\rho(x)-\sum_{(j,k)\in I_+}\left(\frac{k! \lambda
_{j,k}S_{n}^{(k)}(c_{j})}{h^{\alpha,\beta}_{n-1}}\,Q_{k}\left(x,c_{j};P^{\alpha,\beta}_{n-1}\right)\right) \rho _{j,k}(x), \\
B_{2,n}(x)  =&\rho(x)B_{1,n}(x)   = \sum_{(j,k)\in I_+}\left(\frac{k! \lambda _{j,k}S_{n}^{(k)}(c_{j})}{h^{\alpha,\beta}_{n-1}}%
\,Q_{k}\left(x,c_{j};P^{\alpha,\beta}_{n}\right)\right) \rho _{j,k}(x), \\
A_{3,n}(x) = &  A_{2,n}'(x) \left(1-x^2\right) +\widehat{a}_n(x)A_{2,n}(x)+\widehat{d}_{n}B_{2,n}(x), \\
B_{3,n}(x) = & B_{2,n}'(x) \left(1-x^2\right) +\widehat{b}_n A_{2,n}(x)+\widehat{c}_{n}(x)B_{2,n}(x),
\end{align*}
where $A_{2,n}$, $B_{2,n}$, $A_{3,n}$, and $B_{3,n}$ are polynomials of degree at most $d$, $d-1$, $d+1$ and $d$, respectively,  and the coefficients  $\widehat{a}_n(x)$, $\widehat{b}_n$, $\widehat{c}_n(x)$, and $\widehat{d}_n$   are given by \eqref{coeffSzego01}.
\end{lemma}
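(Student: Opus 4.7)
The plan is to obtain \eqref{LemConnForm1} as a direct algebraic consequence of the connection formula \eqref{FConexJ-S}, and then to deduce \eqref{LemConnForm1-Dx} by differentiating \eqref{LemConnForm1} and invoking the Jacobi ladder relations \eqref{StrcRelPn} to eliminate the derivatives of $P^{\alpha,\beta}_{n}$ and $P^{\alpha,\beta}_{n-1}$.

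For the first identity, I would multiply \eqref{FConexJ-S} by $\rho(x)$. The only ``denominators'' appearing in $A_{1,n}(x)$ and $B_{1,n}(x)$ are the factors $(x-c_{j})^{k+1}$ with $(j,k)\in I_+$, and by \eqref{rhoMk} one has $\rho(x)/(x-c_{j})^{k+1}=\rho_{j,k}(x)$, a polynomial of degree $d-k-1$. Hence $A_{2,n}(x)=\rho(x)A_{1,n}(x)$ and $B_{2,n}(x)=\rho(x)B_{1,n}(x)$ are polynomials with exactly the stated form. Since the Taylor polynomials $Q_{k}(x,c_{j};P^{\alpha,\beta}_{n})$ and $Q_{k}(x,c_{j};P^{\alpha,\beta}_{n-1})$ have degree at most $k$ in $x$, each product $Q_{k}\,\rho_{j,k}$ has degree at most $d-1$, which together with the leading $\rho(x)$ term in $A_{2,n}$ gives the bounds $\deg A_{2,n}\leqslant d$ and $\deg B_{2,n}\leqslant d-1$.

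For \eqref{LemConnForm1-Dx}, I would differentiate \eqref{LemConnForm1} and multiply through by $1-x^{2}$, obtaining
$$
\bigl(1-x^{2}\bigr)\bigl(\rho(x)S_{n}(x)\bigr)^{\prime}=\bigl(1-x^{2}\bigr)\!\Bigl[A_{2,n}^{\prime}P^{\alpha,\beta}_{n}+A_{2,n}\bigl(P^{\alpha,\beta}_{n}\bigr)^{\prime}+B_{2,n}^{\prime}P^{\alpha,\beta}_{n-1}+B_{2,n}\bigl(P^{\alpha,\beta}_{n-1}\bigr)^{\prime}\Bigr].
$$
Rewriting \eqref{StrcRelPn} in the equivalent forms
$$
\bigl(1-x^{2}\bigr)\bigl(P^{\alpha,\beta}_{n}(x)\bigr)^{\prime}=\widehat{a}_{n}(x)\,P^{\alpha,\beta}_{n}(x)+\widehat{b}_{n}\,P^{\alpha,\beta}_{n-1}(x),
$$
$$
\bigl(1-x^{2}\bigr)\bigl(P^{\alpha,\beta}_{n-1}(x)\bigr)^{\prime}=\widehat{d}_{n}\,P^{\alpha,\beta}_{n}(x)+\widehat{c}_{n}(x)\,P^{\alpha,\beta}_{n-1}(x),
$$
substituting, and collecting the coefficients of $P^{\alpha,\beta}_{n}$ and $P^{\alpha,\beta}_{n-1}$, I would recover exactly the formulas for $A_{3,n}$ and $B_{3,n}$. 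A degree count using $\deg\widehat{a}_{n}=\deg\widehat{c}_{n}=1$, $\widehat{b}_{n},\widehat{d}_{n}\in\mathbb{R}$, and the bounds on $A_{2,n},B_{2,n}$ just established, then yields $\deg A_{3,n}\leqslant d+1$ and $\deg B_{3,n}\leqslant d$.

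I do not anticipate a serious obstacle: both identities amount to bookkeeping once the connection formula \eqref{FConexJ-S} and the ladder relations \eqref{StrcRelPn} are available. The only subtle point is to confirm that the apparent singularities at $x=c_{j}$ present in $A_{1,n}$ and $B_{1,n}$ disappear after multiplication by $\rho(x)$; this is precisely what the definition \eqref{rhoMk} of $\rho_{j,k}(x)$ has been set up to guarantee, so the polynomial character of $A_{2,n},B_{2,n}$ (and hence of $A_{3,n},B_{3,n}$) is automatic.
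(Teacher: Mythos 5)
Your proposal is correct and follows essentially the same route as the paper: multiply the connection formula \eqref{FConexJ-S} by $\rho(x)$ using \eqref{rhoMk} to clear the factors $(x-c_j)^{k+1}$, then differentiate, multiply by $1-x^2$, and eliminate $\bigl(P^{\alpha,\beta}_{n}\bigr)'$ and $\bigl(P^{\alpha,\beta}_{n-1}\bigr)'$ via the rewritten ladder relations \eqref{StrcRelPn}. The only difference is that you spell out the degree bookkeeping, which the paper states without detail.
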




\begin{proof}
From  \eqref{FConexJ-S} and \eqref{rhoMk}, Equation \eqref{LemConnForm1} is immediate. To prove \eqref{LemConnForm1-Dx}, we can take derivatives with respect to $x$ in both hand
sides of \eqref{LemConnForm1} and then multiply by $1-x^2$%
\begin{align*}
\begin{split}
 \left(1-x^2\right) \left( \rho (x)S_{n}(x)\right) ^{\prime }=& \left(1-x^2\right) A_{2,n}' P_{n}(x)+A_{2,n} \left(1-x^2\right)  \left(P_{n}^{\alpha,\beta}(x)\right)'\\
&+ \left(1-x^2\right) B_{2,n}' P_{n-1}^{\alpha,\beta}(x)+B_{2,n}  \left(1-x^2\right)  \left(P_{n-1}^{\alpha,\beta}(x)\right)'.
\end{split}
\end{align*}%

 Using \eqref{StrcRelPn} in the above expression, we obtain
\begin{align*}
 \left(1-x^2\right) \left(\rho(x)S_{n}(x)\right)^{\prime }=&\left[ A_{2n}'(x) \left(1-x^2\right) +\widehat{a}_n(x)A_{2,n}(x)+B_{2,n}(x)\widehat{d}_{n}\right]P^{\alpha,\beta}_{n}(x)
\\
&+\left[ B_{2,n}'(x) \left(1-x^2\right) +\widehat{b}_n A_{2,n}(x)+B_{2,n}(x)\widehat{c}_{n}(x)\right] P_{n-1}^{\alpha,\beta}(x),
\end{align*}%
which is \eqref{LemConnForm1-Dx}.
\end{proof}



\begin{lemma}
 The sequences of the monic polynomials $\left\{S_{n}\right\}_{n\geqslant  0}$ and
$\left\{P_{n}^{\alpha,\beta}\right\}_{n\geqslant  0}$ are also related by the equations
\begin{align}
\rho (x)S_{n-1}(x) &=C_{2,n}(x) P_{n}^{\alpha,\beta}(x)+D_{2,n}(x) P_{n-1}^{\alpha,\beta}(x),
\label{LemConnForm2} \\
 \left(1-x^2\right) \left(\rho(x)S_{n-1}(x)\right) ^{\prime }
&=C_{3,n}(x)P_{n}^{\alpha,\beta}(x)+D_{3,n}(x) P_{n-1}^{\alpha,\beta}(x),  \label{LemConnForm2-Dx}
\end{align}%
where%
$$
\begin{array}{lllll}
\dsty C_{2,n}(x)  & =  \dsty -\frac{B_{2,n-1}(x)}{\gamma_{2,n-1}},&\quad
\dsty D_{2,n}(x)  & = \dsty A_{2,n-1}(x)+B_{2,n-1}(x)\left(\frac{x-\gamma_{1,n-1}}{\gamma_{2,n-1}}\right),\\
\dsty C_{3,n}(x)  & = \dsty -\frac{B_{3,n-1}(x)}{\gamma_{2,n-1}},&\quad
\dsty D_{3,n}(x)  & = \dsty A_{3,n-1}(x)+B_{3,n-1}(x)\left(\frac{x-\gamma_{1,n-1}}{\gamma_{2,n-1}}\right),\\
\end{array}
$$
where $C_{2,n}(x)$, $D_{2,n}(x)$, $C_{3,n}(x)$, and $D_{3,n}(x)$ are polynomials of degree at most $d-1$, $d$, $d$ and $d+1$, respectively.
\end{lemma}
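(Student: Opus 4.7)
The plan is to apply the previous lemma with $n$ replaced by $n-1$, which produces an expansion of $\rho(x)S_{n-1}(x)$ in the basis $\{P_n^{\alpha,\beta}, P_{n-1}^{\alpha,\beta}, P_{n-2}^{\alpha,\beta}\}$ (actually just the latter two), and then use the three-term recurrence \eqref{Jacob3TRR} to eliminate $P_{n-2}^{\alpha,\beta}$ in favor of $P_n^{\alpha,\beta}$ and $P_{n-1}^{\alpha,\beta}$. The same device then applies verbatim to the differentiated identity \eqref{LemConnForm1-Dx}.

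Concretely, from Lemma~\ref{lemma41} applied at index $n-1$ I have
\begin{align*}
\rho(x)S_{n-1}(x) &= A_{2,n-1}(x)\,P_{n-1}^{\alpha,\beta}(x) + B_{2,n-1}(x)\,P_{n-2}^{\alpha,\beta}(x),\\
(1-x^2)\bigl(\rho(x)S_{n-1}(x)\bigr)' &= A_{3,n-1}(x)\,P_{n-1}^{\alpha,\beta}(x) + B_{3,n-1}(x)\,P_{n-2}^{\alpha,\beta}(x).
\end{align*}
Solving the recurrence \eqref{Jacob3TRR} at index $n-1$ for $P_{n-2}^{\alpha,\beta}$ gives
\begin{equation*}
P_{n-2}^{\alpha,\beta}(x) = -\frac{1}{\gamma_{2,n-1}}\,P_{n}^{\alpha,\beta}(x) + \frac{x-\gamma_{1,n-1}}{\gamma_{2,n-1}}\,P_{n-1}^{\alpha,\beta}(x).
\end{equation*}
Substituting this into each of the two displays above and collecting terms produces exactly \eqref{LemConnForm2} and \eqref{LemConnForm2-Dx}, with the stated expressions for $C_{2,n}, D_{2,n}, C_{3,n}, D_{3,n}$.

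The degree bounds then follow mechanically from those already recorded in Lemma~\ref{lemma41}: since $\dgr{A_{2,n-1}}\leqslant d$ and $\dgr{B_{2,n-1}}\leqslant d-1$, the coefficient $C_{2,n}=-B_{2,n-1}/\gamma_{2,n-1}$ has degree $\leqslant d-1$, while $D_{2,n}=A_{2,n-1}+B_{2,n-1}(x-\gamma_{1,n-1})/\gamma_{2,n-1}$ has degree $\leqslant \max(d,\,d-1+1)=d$. Analogously, from $\dgr{A_{3,n-1}}\leqslant d+1$ and $\dgr{B_{3,n-1}}\leqslant d$ one obtains $\dgr{C_{3,n}}\leqslant d$ and $\dgr{D_{3,n}}\leqslant d+1$. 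The only mild care needed is that $\gamma_{2,n-1}\neq 0$, which is clear from \eqref{3TRR-CoefJacob} for $n\geqslant 1$ with $\alpha,\beta>-1$. There is no real obstacle in this lemma: it is a direct algebraic consequence of the preceding lemma plus the three-term recurrence, and the main bookkeeping is just verifying the degree counts.
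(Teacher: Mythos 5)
Your proof is correct and follows exactly the route the paper indicates: apply Lemma~\ref{lemma41} at index $n-1$ and then use the three-term recurrence \eqref{Jacob3TRR} to eliminate $P_{n-2}^{\alpha,\beta}$ in favor of $P_{n}^{\alpha,\beta}$ and $P_{n-1}^{\alpha,\beta}$. The paper states this in one line without carrying out the substitution; your version simply supplies the details, including the degree bookkeeping, and all the computations check out.
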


\begin{proof}
The proof of \eqref{LemConnForm2} and \eqref{LemConnForm2-Dx} is a straightforward consequence of Lemma \ref{lemma41} and the three-term recurrence relation \eqref{Jacob3TRR}, whose coefficients are given in \eqref{3TRR-CoefJacob}.
\end{proof}



\begin{lemma}
 The monic orthogonal Jacobi polynomials $\left\{P_{n}^{\alpha,\beta}\right\}_{n\geqslant  0}$%
 can be expressed in terms of the monic Sobolev-type polynomials $%
\left\{S_{n}\right\}_{n\geqslant  0}$ in the following way:%
\begin{align}
P_{n}^{\alpha,\beta}(x) &=\frac{\rho (x)}{\Delta_n(x)}\left(D_{2,n}(x)S_{n}(x)-B_{2,n}(x)S_{n-1}(x)\right) ,  \label{Ln-desp-01} \\
P_{n-1}^{\alpha,\beta}(x) &=\frac{\rho (x)}{\Delta_n(x)}\left(A_{2,n}(x)S_{n-1}(x)-C_{2,n}(x)S_{n}(x)\right) .  \label{Ln-desp-02}
\end{align}%
where%
\begin{equation}\label{Main-Determinant}
\Delta_n(x)=\det\begin{pmatrix}
A_{2,n}(x) & B_{2,n}(x) \\
C_{2,n}(x) & D_{2,n}(x)%
\end{pmatrix}%
=A_{2,n}(x)D_{2,n}(x)-C_{2,n}(x)B_{2,n}(x)
\end{equation}%
is a polynomial of the degree  $2d$.
\end{lemma}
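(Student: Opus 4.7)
The proof is essentially a linear-algebra manipulation followed by a degree computation. I would first rewrite \eqref{LemConnForm1} and \eqref{LemConnForm2} as the matrix identity
\[
\rho(x)\begin{pmatrix} S_{n}(x) \\ S_{n-1}(x) \end{pmatrix}
=\begin{pmatrix} A_{2,n}(x) & B_{2,n}(x) \\ C_{2,n}(x) & D_{2,n}(x) \end{pmatrix}\begin{pmatrix} P_{n}^{\alpha,\beta}(x) \\ P_{n-1}^{\alpha,\beta}(x) \end{pmatrix}.
\]
Multiplying the top row by $D_{2,n}$ and subtracting $B_{2,n}$ times the bottom (and, symmetrically, the complementary combination) gives the polynomial identities
\[
\Delta_{n}(x)P_{n}^{\alpha,\beta}(x)=\rho(x)\bigl[D_{2,n}(x)S_{n}(x)-B_{2,n}(x)S_{n-1}(x)\bigr],\qquad
\Delta_{n}(x)P_{n-1}^{\alpha,\beta}(x)=\rho(x)\bigl[A_{2,n}(x)S_{n-1}(x)-C_{2,n}(x)S_{n}(x)\bigr].
\]
Once $\Delta_{n}\not\equiv 0$ is established, \eqref{Ln-desp-01} and \eqref{Ln-desp-02} follow by dividing.

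The real work is to show $\deg \Delta_{n}=2d$. From the degree bounds in the preceding two lemmas one has $\deg(A_{2,n}D_{2,n})\leqslant 2d$ and $\deg(B_{2,n}C_{2,n})\leqslant 2d-2$, so $\deg \Delta_{n}\leqslant 2d$ and equality will follow from showing the $x^{2d}$-coefficient of $A_{2,n}D_{2,n}$ does not vanish. First, $A_{2,n}(x)=\rho(x)A_{1,n}(x)$ with $A_{1,n}(x)\to 1$ as $x\to\infty$ (each summand $Q_{k}(x,c_{j};P^{\alpha,\beta}_{n-1})/(x-c_{j})^{k+1}$ is $O(x^{-1})$), so $A_{2,n}$ is monic of degree $d$.

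For $D_{2,n}(x)=A_{2,n-1}(x)+\gamma_{2,n-1}^{-1}(x-\gamma_{1,n-1})B_{2,n-1}(x)$, the coefficient of $x^{d}$ equals $1+\gamma_{2,n-1}^{-1}[x^{d-1}]B_{2,n-1}$. Since $\rho_{j,k}$ is monic of degree $d-k-1$ and $[x^{k}]Q_{k}(x,c_{j};P^{\alpha,\beta}_{n-1})=(P^{\alpha,\beta}_{n-1})^{(k)}(c_{j})/k!$, expanding the defining sum for $B_{2,n-1}$ at infinity gives
\[
[x^{d-1}]B_{2,n-1}=\frac{1}{h^{\alpha,\beta}_{n-2}}\sum_{(j,k)\in I_{+}}\lambda_{j,k}\,S_{n-1}^{(k)}(c_{j})\bigl(P^{\alpha,\beta}_{n-1}\bigr)^{(k)}(c_{j}).
\]
The key identification is now: since $S_{n-1}$ is Sobolev-orthogonal to $\PP_{n-2}$ and $S_{n-1}=P^{\alpha,\beta}_{n-1}+(\text{lower})$, one has $\|S_{n-1}\|_{\sob}^{2}=\IpS{S_{n-1}}{P^{\alpha,\beta}_{n-1}}$; splitting \eqref{GeneralSIP} into its continuous and discrete parts and using $\Ip{S_{n-1}}{P^{\alpha,\beta}_{n-1}}_{\mu}=h^{\alpha,\beta}_{n-1}$ converts the sum into $\|S_{n-1}\|_{\sob}^{2}-h^{\alpha,\beta}_{n-1}$. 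Combined with $\gamma_{2,n-1}=h^{\alpha,\beta}_{n-1}/h^{\alpha,\beta}_{n-2}$, the leading coefficient of $D_{2,n}$ collapses to $\|S_{n-1}\|_{\sob}^{2}/h^{\alpha,\beta}_{n-1}>0$, yielding both $\deg \Delta_{n}=2d$ and $\Delta_{n}\not\equiv 0$.

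The main obstacle is this last bookkeeping step: tracking leading coefficients through the interlocking definitions of $A_{2,n},B_{2,n},C_{2,n},D_{2,n}$ and recognising that the resulting sum at the nodes $c_{j}$ reassembles as a strictly positive Sobolev norm. Everything else is a direct application of Cramer's rule to the $2\times 2$ system coming from the connection formula.
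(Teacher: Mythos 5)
Your proof is correct, and the first half (Cramer's rule applied to the $2\times 2$ system formed by \eqref{LemConnForm1} and \eqref{LemConnForm2}, plus the bounds $\dgr{B_{2,n}C_{2,n}}\leqslant 2d-2$ and $A_{2,n}$ monic of degree $d$) coincides with the paper's argument. Where you genuinely diverge is in proving that the $x^{d}$-coefficient of $D_{2,n}$, namely $1+\Lambda_{n-1}/(\gamma_{2,n-1}h^{\alpha,\beta}_{n-2})$ with $\Lambda_{n-1}=\sum_{(j,k)\in I_+}\lambda_{j,k}S_{n-1}^{(k)}(c_j)\bigl(P^{\alpha,\beta}_{n-1}\bigr)^{(k)}(c_j)$, does not vanish. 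The paper reaches back to the matrix identity \eqref{solve1} to write $\Lambda_{n-1}=\left(\mathcal S_{n-1}(\mathcal C)\right)^{\intercal}\mathcal L\left(\mathcal I_{d^*}+\mathcal K_{n-2}(\mathcal C,\mathcal C)\mathcal L\right)\mathcal S_{n-1}(\mathcal C)$ and invokes positive definiteness (which rests on the full-rank Lemma \ref{LemmaFullRank}) to get the stronger statement $\Lambda_{n-1}>0$, recorded as \eqref{LambdaNpositive} and reused later in Theorem \ref{lemma44}. You instead observe that $h^{\alpha,\beta}_{n-1}+\Lambda_{n-1}=\Ip{S_{n-1}}{P^{\alpha,\beta}_{n-1}}_{\mu}+\sum_{(j,k)\in I_+}\lambda_{j,k}S_{n-1}^{(k)}(c_j)\bigl(P^{\alpha,\beta}_{n-1}\bigr)^{(k)}(c_j)=\IpS{S_{n-1}}{P^{\alpha,\beta}_{n-1}}=\|S_{n-1}\|_{\sob}^{2}$, so that with $\gamma_{2,n-1}h^{\alpha,\beta}_{n-2}=h^{\alpha,\beta}_{n-1}$ the leading coefficient equals $\|S_{n-1}\|_{\sob}^{2}/h^{\alpha,\beta}_{n-1}>0$. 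This is a clean, self-contained alternative that bypasses the kernel-matrix machinery entirely and also dispenses with the paper's implicit case split on whether $\dgr{B_{2,n-1}}=d-1$; its only cost is that it yields positivity of the leading coefficient rather than the sharper inequality $\Lambda_{n-1}>0$ itself, which the paper exploits again when computing $\dgr{q_{i,n}}$ in Theorem \ref{lemma44}. For the lemma as stated, your argument is complete.
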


\begin{proof}
Note that \eqref{LemConnForm1} and \eqref{LemConnForm2} form  a system of two linear equations with the two unknowns $P_{n}^{\alpha,\beta}(x)$ and $P_{n-1}^{\alpha,\beta}(x)$. Therefore, from Cramer's rule, we obtain \eqref{Ln-desp-01} and \eqref{Ln-desp-02}.

As $\dgr{C_{2,n}\,B_{2,n}}\leqslant 2d-2$ and $\dsty \lim_{x\to \infty} \frac{A_{2,n}(x)}{x^{2d}}=1$, we obtain
\begin{align}\label{Leading-Delta}
 \lim_{x\to \infty} \frac{\Delta_n(x)}{x^{2d}}=& \lim_{x\to \infty} \frac{ D_{2,n(x)}}{x^{d}} =\left\{
                                                                                                 \begin{array}{ll}
                                                                                                   1, & \text{if }\;  \dgr{B_{2,n-1}}<d-1,\\ \dsty
                                                                                                  1+\frac{\Lambda_{n-1}}{\gamma_{2,n-1} \,h^{\alpha,\beta}_{n-2} }, & \text{if }\;  \dgr{B_{2,n-1}}=d-1,
                                                                                                 \end{array}
                                                                                               \right.
\end{align}
where $\dsty\Lambda_{n-1}=\; \sum_{(i,j)\in I_+}\lambda_{k,j}\left(S_{n-1}(c_j)\right)^{(k)}\left(P_{n-1}^{\alpha.\beta}(c_j)\right)^{(k)}= \left(\mathcal S_{n-1}(\mathcal C)\right)^T \mathcal L \mathcal P_{n-1}(\mathcal C)$. From \eqref{solve1},
\begin{align*}
\Lambda_{n-1}& =\left(\mathcal{S}_{n-1}(\mathcal C)\right)^T \mathcal L \left(\mathcal{I}_{d^*}+\mathcal{K}_{n-2}(\mathcal C,\mathcal C)\mathcal L\right)\mathcal{S}_{n-1}(\mathcal C).
\end{align*}
Since the matrix $\mathcal L \left(\mathcal{I}_{d^*}+\mathcal{K}_{n-2}(\mathcal C,\mathcal C)\mathcal L\right)$ is positive definite, we conclude that
\begin{equation}\label{LambdaNpositive}
\Lambda_{n-1}>0, \text{ for all } n\in \NN;
\end{equation}
i.e.,  $\Delta_n(x)$  is a polynomial of the degree  $2d$.
\end{proof}

\begin{remark}  Obviously, from \eqref{Ln-desp-01} (or \eqref{Ln-desp-02}), we have that $\Delta_n(x)= \rho (x) \delta_n(x)$, where $\delta_n$ is a polynomial of the degree  $d$. Hence, from \eqref{Main-Determinant},
\begin{align*}
 \delta_n(x) & =A_{1,n}(x)D_{2,n}(x)-B_{1,n}(x)C_{2,n}(x).
\end{align*}
\end{remark}

\begin{theorem}\label{lemma44} Under the above assumptions,  we have the following ladder equations:
\begin{align}
A_{4,n}(x)S_{n}(x)+B_{4,n}(x)S_{n}^{\prime}(x) & =S_{n-1}(x),
\label{laddereq1} \\
C_{4,n}(x)S_{n-1}(x)+D_{4,n}(x)S_{n-1}^{\prime}(x) & = S_{n}(x),
\label{laddereq2}
\end{align}%
where
\begin{align*}
 A_{4,n}(x)  & = \frac{q_{2,n}(x) }{q_{1,n}(x)},  \;    B_{4,n}(x)  = \frac{q_{0,n}(x)}{q_{1,n}(x)}, \;
C_{4,n}(x)   = \frac{q_{3,n}(x)}{q_{4,n}(x)},\;   D_{4,n}(x)   = \frac{q_{0,n}(x)}{q_{4,n}(x)}.\\
q_{0,n}(x)&= \left(1-x^2\right)\Delta_n(x),\quad \dgr{q_{0,n}}=2d+2.\\
q_{1,n}(x)&= B_{3,n}(x)A_{2,n}(x)-A_{3,n}(x)B_{2,n}(x),\quad \dgr{q_{1,n}}= 2d.\\
q_{2,n}(x) &=(1-x^2)\rho^{\prime}(x)\delta_n(x)+B_{3,n}(x)C_{2,n}(x)- A_{3,n}(x)D_{2,n}(x),\quad \dgr{q_{2,n}}= 2d+1.\\
q_{3,n}(x) & = (1-x^2)\rho^{\prime}(x)\delta_n(x) +C_{3,n}(x)B_{2,n}(x)-D_{3,n}(x)A_{2,n}(x),\quad \dgr{q_{3,n}}= 2d+1.\\
q_{4,n}(x)& = C_{3,n}(x)D_{2,n}(x)-D_{3,n}(x)C_{2,n}(x),  \quad \dgr{q_{4,n}}= 2d.
\end{align*}
\end{theorem}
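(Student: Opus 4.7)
The plan is to eliminate the Jacobi polynomials $P_n^{\alpha,\beta}$ and $P_{n-1}^{\alpha,\beta}$ between the four relations provided by Lemma~\ref{lemma41} and the two lemmas that follow it, using the inversion formulas \eqref{Ln-desp-01}--\eqref{Ln-desp-02}.

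To obtain the lowering equation \eqref{laddereq1}, I would start from \eqref{LemConnForm1-Dx} and substitute \eqref{Ln-desp-01} and \eqref{Ln-desp-02} on the right-hand side, which gives
\begin{equation*}
(1-x^2)\bigl(\rho(x) S_n(x)\bigr)' = \frac{\rho(x)}{\Delta_n(x)}\bigl[(A_{3,n}D_{2,n}-B_{3,n}C_{2,n})\,S_n + (B_{3,n}A_{2,n}-A_{3,n}B_{2,n})\,S_{n-1}\bigr].
\end{equation*}
Multiplying both sides by $\delta_n(x)=\Delta_n(x)/\rho(x)$, expanding $(\rho S_n)' = \rho' S_n + \rho S_n'$ on the left, and rearranging yields
\begin{equation*}
(B_{3,n}A_{2,n}-A_{3,n}B_{2,n})\,S_{n-1} = \bigl[(1-x^2)\rho'\delta_n + B_{3,n}C_{2,n} - A_{3,n}D_{2,n}\bigr]\,S_n + (1-x^2)\Delta_n\,S_n'.
\end{equation*}
The three bracketed polynomials are precisely $q_{1,n}$, $q_{2,n}$, and $q_{0,n}$, so dividing by $q_{1,n}(x)$ produces \eqref{laddereq1}. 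For the raising equation \eqref{laddereq2}, I would repeat the argument verbatim with \eqref{LemConnForm2-Dx} in place of \eqref{LemConnForm1-Dx}; the analogous substitution produces $q_{4,n}S_n = q_{3,n}S_{n-1} + q_{0,n}S_{n-1}'$, from which \eqref{laddereq2} follows after division by $q_{4,n}(x)$.

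For the degree claims, $q_{0,n} = (1-x^2)\Delta_n$ is immediate. For $q_{2,n}$, the contribution of $(1-x^2)\rho'\delta_n$ at $x^{2d+1}$ is $-d\,\ell$ and that of $-A_{3,n}D_{2,n}$ is $(n+d)\,\ell$, where $\ell>0$ denotes the leading coefficient of $\delta_n$ (equal to that of $D_{2,n}$ by the remark following \eqref{Main-Determinant}); the term $B_{3,n}C_{2,n}$ is of lower degree. The net leading coefficient is $n\ell>0$, so $\dgr{q_{2,n}}=2d+1$, and $q_{3,n}$ is analogous. For $q_{1,n}=B_{3,n}A_{2,n}-A_{3,n}B_{2,n}$, the leading coefficient at $x^{2d}$ computes to $\widehat{b}_n + (2n+\alpha+\beta+1)\,b^\ast$, where $b^\ast\geqslant 0$ is the leading coefficient of $B_{2,n}$ at $x^{d-1}$; since $\widehat{b}_n>0$, this sum is strictly positive, forcing $\dgr{q_{1,n}}=2d$ unconditionally, and likewise for $q_{4,n}$.

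The main obstacle will be certifying that $q_{1,n}$ and $q_{4,n}$ actually attain degree $2d$, since this is what legitimates dividing in order to define $A_{4,n},B_{4,n},C_{4,n},D_{4,n}$ as rational functions. The algebraic eliminations themselves are routine Cramer-type manipulations; the work resides in the leading-coefficient calculation, which leverages $\widehat{b}_n>0$ together with a positive-definiteness argument analogous to the one used for \eqref{LambdaNpositive}, now applied to guarantee $b^\ast\geqslant 0$.
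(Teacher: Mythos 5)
Your proposal is correct and follows essentially the same route as the paper: substituting the Cramer inversion formulas \eqref{Ln-desp-01}--\eqref{Ln-desp-02} into \eqref{LemConnForm1-Dx} and \eqref{LemConnForm2-Dx}, expanding $(\rho S_n)'$, and then certifying $\dgr{q_{1,n}}=\dgr{q_{4,n}}=2d$ via leading-coefficient computations in which your $b^{\ast}$ is exactly the quantity $\Lambda_n/h^{\alpha,\beta}_{n-1}$ whose positivity is \eqref{LambdaNpositive}. You also correctly identify the leading coefficients $n\ell$ for $q_{2,n}$ and the role of $\widehat{b}_n>0$, matching the paper's case analysis.
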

\begin{proof}
Replacing \eqref{Ln-desp-01} and \eqref{Ln-desp-02} in \eqref{LemConnForm1-Dx}
and \eqref{LemConnForm2-Dx}, the two ladder Equations \eqref{laddereq1} and \eqref{laddereq2}  follow.

\begin{enumerate}
  \item
$$
\lim_{x\to \infty} \frac{q_{1,n}(x)}{x^{2d}}   = \left\{  \begin{array}{ll}      \widehat{b}_{n}, & \text{if }\;  \dgr{B_{2,n}}<d-1,\\  \dsty        \widehat{b}_n+(2n+\alpha+\beta+1)\frac{\Lambda_n}{h_{n-1}^{\alpha,\beta}}, & \text{if }\;  \dgr{B_{2,n}}=d-1,  \end{array}\right.
$$
where, according to \eqref{LambdaNpositive}, $ \dsty  \Lambda _{n} >0$,  i.e.,  $\dgr{q_{1,n}}= 2d.$

\item From \eqref{Leading-Delta}, $\dsty \lim_{x\to \infty} \frac{\delta_n(x)}{x^d}=\lim_{x\to \infty} \frac{D_{2,n}(x)}{x^d}=\kappa_2>0$.
\begin{align*}
\lim_{x\to \infty} \frac{q_{2,n}(x)}{x^{2d+1}} & = \kappa_2 \left(\lim_{x\to \infty} \frac{(1-x^2) \rho^{\prime}(x)}{x^{d+1}}-\lim_{x\to \infty} \frac{A_{3,n}(x)}{x^{d+1}}\right) = \kappa_2  \left(-d+n+d\right)\\
& = \begin{cases}
n, & \text{if }\;  \dgr{B_{2,n-1}}<d-1,\\
\dsty                                                                                              n+\frac{n\Lambda_{n-1}}{\gamma_{2,n-1}\,h^{\alpha,\beta}_{n-2} }, & \text{if }\;  \dgr{B_{2,n-1}}=d-1,
\end{cases}
\end{align*}
where, according to \eqref{LambdaNpositive}, $ \dsty  \Lambda _{n-1} >0$,  i.e.,  $\dgr{q_{2,n}} = 2d+1$.

  \item
$$ \lim_{x\to \infty} \frac{q_{4,n}(x)}{x^{2d}}    = \left\{   \begin{array}{ll}
-\frac{\widehat{b}_{n-1}}{\gamma_{2,n-1}}, & \! \!  \text{if }   \dgr{B_{2,n-1}}<d-1,\\
  \dsty                                                                                                -\frac{\widehat{b}_{n-1}+(2n+\alpha+\beta-1)\frac{\Lambda_{n-1}}{h_{n-2}^{\alpha,\beta}}}{\gamma_{2,n-1}}, &\! \! \text{if } \dgr{B_{2,n-1}}=d-1.                                                                                               \end{array}                                                                                               \right.
$$
Then, according to \eqref{LambdaNpositive},    $\dgr{q_{4,n}} = 2d$.
  \item
\begin{align*}
\lim_{x\to \infty} \frac{q_{3,n}(x)}{x^{2d+1}}  & = -d \kappa_2 -\lim_{x\to \infty} \frac{D_{3,n}(x)}{x^{d+1}} \\
& = \begin{cases}
-(n+\alpha+\beta), & \text{if }\;  \dgr{B_{2,n-1}}<d-1,\\
\dsty
-(n+\alpha+\beta)\left(1+\frac{\Lambda_{n-1}}{\gamma_{2,n-1}\,h^{\alpha,\beta}_{n-2} }\right), & \text{if }\;  \dgr{B_{2,n-1}}=d-1,
\end{cases}
\end{align*}
where, according to \eqref{LambdaNpositive}, $ \dsty  \Lambda _{n-1} >0$,  i.e.,  $\dgr{q_{3,n}} = 2d+1$.
\end{enumerate}
\end{proof}

In the previous theorem, the polynomials $q_{k,n}$ were defined. Note that these polynomials are closely related to certain determinants. The following result summarizes some of their properties that will be of interest later. For brevity, we introduce the following~\mbox{notations}:
\begin{align*}
\Delta_{1,n}(x)&=B_{3,n}(x)A_{2,n}(x)-A_{3,n}(x)B_{2,n}(x). \\
\Delta_{2,n}(x) &=B_{3,n}(x)C_{2,n}(x)- A_{3,n}(x)D_{2,n}(x). \\
\Delta_{3,n}(x) & =B_{2,n}(x)C_{3,n}(x)-A_{2,n}(x)D_{3,n}(x).
\end{align*}

\begin{lemma}\label{LemmaDeterminant} Let $\dsty \rho_{N}(x) =\prod_{j=1}^{N} \left( x-c_j\right)$ and $\dsty \rho_{d-N}(x)=\prod_{j=1}^{N} \left( x-c_j\right)^{d_j}=\frac{\rho(x)}{\rho_{N}(x)}$. Then, the above polynomial determinants admit the following decompositions:

\begin{equation}\label{EqusLemma5}
\begin{aligned}
\Delta_{1,n}(x)&= \rho_{d-N}(x) \; \varphi_{1,n}(x), \quad \text{where }\;\dgr{\varphi_{1,n}}=d+N.\\
 \Delta_{2,n}(x)&=  \rho_{d-N}(x)\; \varphi_{2,n}(x), \quad \text{where }\;\dgr{\varphi_{2,n}}=d+N+1 .\\
  \Delta_{3,n}(x)&=  \rho_{d-N}(x) \; \varphi_{3,n}(x), \quad \text{where }\;\dgr{\varphi_{3,n}}=d+N+1 .
\end{aligned}
\end{equation}
\end{lemma}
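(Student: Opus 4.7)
My plan is to establish the factorization $\Delta_{i,n}=\rho_{d-N}\,\varphi_{i,n}$ by combining Cramer's rule applied to the four expansions of the preceding three lemmas with a local order-of-vanishing estimate at each $c_j$, and then to read off the degrees by a short leading-coefficient comparison.

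First, I would treat \eqref{LemConnForm1}, \eqref{LemConnForm1-Dx}, \eqref{LemConnForm2}, and \eqref{LemConnForm2-Dx} as two $2\times 2$ linear systems in the unknowns $P_n^{\alpha,\beta}$, $P_{n-1}^{\alpha,\beta}$, with coefficient rows $(A_{2,n},B_{2,n})$, $(A_{3,n},B_{3,n})$ on the one hand, and $(C_{2,n},D_{2,n})$, $(C_{3,n},D_{3,n})$ on the other. Eliminating $P_{n-1}^{\alpha,\beta}$ in the three mixed ways that produce the three determinants yields
\begin{align*}
\Delta_{1,n}(x)\,P_n^{\alpha,\beta}(x)&=B_{3,n}(x)\rho(x)S_n(x)-B_{2,n}(x)(1-x^2)(\rho S_n)'(x),\\
\Delta_{2,n}(x)\,P_n^{\alpha,\beta}(x)&=B_{3,n}(x)\rho(x)S_{n-1}(x)-D_{2,n}(x)(1-x^2)(\rho S_n)'(x),\\
-\Delta_{3,n}(x)\,P_n^{\alpha,\beta}(x)&=D_{3,n}(x)\rho(x)S_n(x)-B_{2,n}(x)(1-x^2)(\rho S_{n-1})'(x).
\end{align*}

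Next I would analyze the order of vanishing at a fixed $c_j$. Writing $\rho(x)=(x-c_j)^{d_j+1}\tilde r_j(x)$ with $\tilde r_j(c_j)\neq 0$, the product $\rho S_\ell$ vanishes at $c_j$ to order $\geqslant d_j+1$ for $\ell\in\{n-1,n\}$ and its derivative to order $\geqslant d_j$; the factor $(1-x^2)$ never lowers these orders. Hence each right-hand side above vanishes to order at least $d_j$ at $c_j$. Since $c_j\in \RR\setminus(-1,1)$ while all zeros of $P_n^{\alpha,\beta}$ lie in $(-1,1)$ and the boundary values $P_n^{\alpha,\beta}(\pm 1)$ are nonzero by \eqref{3TRR-CoefJacob}'s companion identity (11), we have $P_n^{\alpha,\beta}(c_j)\neq 0$. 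Therefore each $\Delta_{i,n}$ itself vanishes to order at least $d_j$ at $c_j$, and pairwise distinctness of the $c_j$ gives $\rho_{d-N}(x)=\prod_{j=1}^N (x-c_j)^{d_j}\mid \Delta_{i,n}(x)$, which defines $\varphi_{i,n}:=\Delta_{i,n}/\rho_{d-N}$.

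For the degrees, the case $i=1$ is already in Theorem~\ref{lemma44}: $\dgr{\Delta_{1,n}}=\dgr{q_{1,n}}=2d$, giving $\dgr{\varphi_{1,n}}=d+N$. For $\Delta_{2,n}$ and $\Delta_{3,n}$ only $-A_{3,n}D_{2,n}$ and $-A_{2,n}D_{3,n}$ can contribute at degree $2d+1$, since $\dgr{B_{3,n}C_{2,n}}\leqslant 2d-1$ and $\dgr{B_{2,n}C_{3,n}}\leqslant 2d-1$. Using $\dgr{A_{3,n}}=d+1$ with leading coefficient $-(n+d)$, $\dgr{D_{2,n}}=d$ with the positive leading coefficient $\kappa_2$ obtained in \eqref{Leading-Delta}, and the analogous leading coefficient of $D_{3,n}$ (computed from $D_{3,n}=A_{3,n-1}+B_{3,n-1}(x-\gamma_{1,n-1})/\gamma_{2,n-1}$ using $\widehat{b}_{n-1}/\gamma_{2,n-1}=2n+\alpha+\beta-1$), one checks that the top coefficients do not cancel, so $\dgr{\Delta_{2,n}}=\dgr{\Delta_{3,n}}=2d+1$ and hence $\dgr{\varphi_{2,n}}=\dgr{\varphi_{3,n}}=d+N+1$. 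The conceptual heart of the lemma is the divisibility, and that part is delivered cleanly by the Cramer--plus--vanishing argument; the main technical nuisance is the leading-coefficient bookkeeping for $\Delta_{2,n}$ and $\Delta_{3,n}$, which can be shortened by reusing the top-coefficient analyses already carried out in Theorem~\ref{lemma44} for $q_{2,n}=(1-x^2)\rho'\delta_n+\Delta_{2,n}$ and $q_{3,n}=(1-x^2)\rho'\delta_n+\Delta_{3,n}$.
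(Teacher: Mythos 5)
Your proposal is correct and follows essentially the same route as the paper: eliminate $P_{n-1}^{\alpha,\beta}$ from the appropriate pairs among \eqref{LemConnForm1}--\eqref{LemConnForm2-Dx} to express $\Delta_{i,n}P_n^{\alpha,\beta}$ in terms of $\rho S_\ell$ and $(1-x^2)(\rho S_\ell)'$, deduce divisibility by $\rho_{d-N}$ from the order of vanishing at each $c_j$ (the paper factors $\rho_{d-N}$ out of the right-hand side explicitly, which is the same observation) together with $P_n^{\alpha,\beta}(c_j)\neq 0$, and read off the degrees from the leading-coefficient computations already done in Theorem \ref{lemma44}. The only cosmetic difference is that the paper writes out the case $i=1$ in full and declares the other two analogous, while you record all three elimination identities.
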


\begin{proof} Multiplying \eqref{LemConnForm1}  by $B_{3,n}$ and \eqref{LemConnForm1-Dx} by $B_{2,n}$ and taking their difference, we have
\begin{align*}
\Delta_{1,n}(x) P^{\alpha,\beta}_n(x)  = &\; \rho(x) B_{3,n}(x) S_n(x)-(1-x^2) B_{2,n}(x)  \left(\rho^{\prime}(x) S_n(x)+\rho(x)   S_n^{\prime}(x) \right)\\
= & \; \rho_{d-N}(x) \Big( \rho_{N}(x)  B_{3,n}(x) S_n(x)-(1-x^2)B_{2,n}(x) \\
 & \Big(  \sum_{j=1}^{N}(d_j+1)\, \rho_{j,d_j}(x) \,S_n(x) +\rho_{N}(x)  \, S_n^{\prime}(x)\Big)\Big).
\end{align*}

As $P^{\alpha,\beta}_n(c_j) \neq 0$ for $j=1,\dots,N$ and $\dgr{\Delta_{1,n}}=\dgr{q_{1,n}}=2d$ (see the proof of Theorem~\ref{lemma44}), then there exists a polynomial  $\varphi_{1,n}$ of the degree $d+N$ such that $\Delta_{1,n}(x) = \rho_{d-N}(x) \; \varphi_{1,n}(x)$.

For the decomposition of $\Delta_{2,n}$ ($\Delta_{3,n}$) the procedure of the proof is analogous, using the linear system of \eqref{LemConnForm1-Dx} and \eqref{LemConnForm2} (\eqref{LemConnForm1}--\eqref{LemConnForm2-Dx}).
\end{proof}

\section{Ladder Jacobi-Sobolev Differential Operators and Consequences}\label{Sec-DiffEqn}

\begin{definition}[Ladder Jacobi-Sobolev differential operators] Let $\mathfrak{I}$ be  the identity operator. We define the two ladder differential operator on $\PP$ as
\begin{align*}
\mathfrak{L}_{n}^{\downarrow} & := A_{4,n}(x) \mathfrak{I}+B_{4,n}(x) \frac{d}{dx}\quad  \text{(lowering Jacobi-Sobolev differential operator)},\\
 \mathfrak{L}_{n}^{\uparrow} &:= C_{4,n}(x) \mathfrak{I}+D_{4,n}(x) \frac{d}{dx} \quad \text{(raising Jacobi-Sobolev differential operator)} .
\end{align*}
\end{definition}

\begin{remark} \label{Remark-Operators}Assume in \eqref{GeneralSIP} that  $d\mu(x)=d\mu^{\alpha,\beta}(x)=(1-x)^{\alpha}(1+x)^{\beta} dx$   $\left(\alpha,\beta>-1\right)$,  whose support is $[-1,1]$ and $\lambda_{j,k}\equiv 0$ for all pairs $(j,k)$. Under these conditions, it is not difficult to verify that $\mathfrak{L}_{n}^{\downarrow} \equiv \widehat{\mathfrak{L}}_{n}^{\downarrow} $ and $\mathfrak{L}_{n}^{\uparrow} \equiv \widehat{\mathfrak{L}}_{n}^{\uparrow} $.
\end{remark}

Now,  we can rewrite the ladder Equations \eqref{laddereq1} and \eqref{laddereq2} as%
\begin{align}
\mathfrak{L}_{n}^{\downarrow} \left[S_{n}(x) \right]&=\left( A_{4,n}(x)\mathfrak{I}+B_{4,n}(x)\frac{d}{dx}\right)S_{n}(x) =S_{n-1}(x),  \label{laddereq1rew} \\
\mathfrak{L}_{n}^{\uparrow}\left[ S_{n-1}(x)\right]&=\left( C_{4,n}(x)\mathfrak{I}+D_{4,n}(x)\frac{d}{dx}\right)S_{n-1}(x)  = S_{n}(x).  \label{laddereq2rew}
\end{align}%

In this section, we state several consequences of Equations \eqref{laddereq1rew} and \eqref{laddereq2rew}, which generalize known results for classical Jacobi polynomials to the Jacobi-Sobolev case.

First, we are going to obtain a second-order differential  equation with polynomial coefficients for $S_{n}$. The procedure  is well known and consists in applying the
raising operator $\mathfrak{L}_{n}^{\uparrow}$\ to both sides of the formula $ \mathfrak{L}_{n}^{\downarrow}\left[S_{n}\right]=S_{n-1}$. Thus, we have%
\begin{align*}
  0 = &  \mathfrak{L}_{n}^{\uparrow} \left[\mathfrak{L}_{n}^{\downarrow}\left[S_{n}(x)\right]\right]-S_{n}(x) \\
  = &  B_{4,n}(x)D_{4,n}(x)S_{n}^{\prime \prime }(x) \\
& + \left(A_{4,n}(x)D_{4,n}(x)+B_{4,n}(x)C_{4,n}(x)+D_{4,n}(x)B_{4,n}^\prime(x)\right)S_{n}^{\prime}(x)    \\
&+ \left(A_{4,n}(x)C_{4,n}(x)+D_{4,n}(x)A_{4,n}^\prime(x)-1\right) S_{n}(x) \\
=&\frac{q_{0,n}^2(x)}{q_{1,n}(x)q_{4,n}(x)} \; S_{n}^{\prime \prime }(x)\\
&  + \frac{q_{0,n}(x) \left(q_{1,n}(x)q_{2,n}(x)+q_{1,n}(x)q_{3,n}(x)+q_{0,n}^{\prime}(x) q_{1,n}(x)-q_{0,n}(x) q_{1,n}^{\prime}(x)\right)}{q_{4,n}(x) q_{1,n}^2(x)} \;S_{n}^{\prime}(x) \\
& + \left(\frac{q_{1,n}(x)q_{2,n}(x) q_{3,n}(x)+q_{0,n}(x) \left(q_{2,n}^{\prime}(x) q_{1,n}(x)-q_{2,n}(x) q_{1,n}^{\prime}(x)\right)}{q_{4,n}(x) q_{1,n}^2(x)}-1\right) S_{n}(x),
\end{align*}

from where we conclude the following result.
\begin{theorem}\label{Poly-HolEq} The $n$th monic orthogonal polynomial with
respect to the inner product \eqref{GeneralSIP} is a polynomial
solution of the second-order linear differential equation, with polynomial  coefficients%
\begin{equation}\label{DifEq-PolyCoef}
\mathfrak{P}_{2,n}(x)S_{n}^{\prime \prime }(x)+\mathfrak{P}_{1,n}(x)S_{n}^{\prime }(x)+\mathfrak{P}_{0,n}(x) S_{n} (x)=0,
\end{equation}%
where%
\begin{equation}\label{DiffEqn-CoefSobolev}
\begin{aligned}
 \mathfrak{P}_{2,n}(x) = &  q_{1,n}(x) q_{0,n}^2(x), \\
 \mathfrak{P}_{1,n}(x)= &   q_{0,n}(x) \left(q_{1,n}(x)q_{2,n}(x)+q_{1,n}(x)q_{3,n}(x)+q_{0,n}^{\prime}(x) q_{1,n}(x)-q_{0,n}(x) q_{1,n}^{\prime}(x)\right),\\
\mathfrak{P}_{0,n}(x)= & q_{1,n}(x)q_{2,n}(x) q_{3,n}(x)+q_{0,n}(x) \left(q_{2,n}^{\prime}(x) q_{1,n}(x)-q_{2,n}(x) q_{1,n}^{\prime}(x)\right)\\ &  - q_{4,n}(x) q_{1,n}^2(x),\\
&\dgr{\mathfrak{P}_{2,n}}=6d+4, \; \dgr{\mathfrak{P}_{1,n}}\leqslant 6d+3 \; \text{, and }\; \dgr{\mathfrak{P}_{0,n}}\leqslant 6d+2.
\end{aligned}
\end{equation}
\end{theorem}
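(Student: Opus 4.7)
The plan is to derive the differential equation by composing the raising and lowering operators, along the lines already sketched in the paragraph preceding the statement. Starting from (\ref{laddereq1rew}) and (\ref{laddereq2rew}), the identity $\mathfrak{L}_n^{\uparrow}\bigl[\mathfrak{L}_n^{\downarrow}[S_n]\bigr]=S_n$ yields the operator equation $\bigl(\mathfrak{L}_n^{\uparrow}\circ\mathfrak{L}_n^{\downarrow}-\mathfrak{I}\bigr)[S_n]=0$. Expanding this composition by the product rule produces a second-order linear equation
\begin{equation*}
B_{4,n}D_{4,n}\,S_n''+\bigl(A_{4,n}D_{4,n}+B_{4,n}C_{4,n}+D_{4,n}B_{4,n}'\bigr)S_n'+\bigl(A_{4,n}C_{4,n}+D_{4,n}A_{4,n}'-1\bigr)S_n=0,
\end{equation*}
whose coefficients are, a priori, rational functions of $x$.

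Next, I would substitute the formulas $A_{4,n}=q_{2,n}/q_{1,n}$, $B_{4,n}=q_{0,n}/q_{1,n}$, $C_{4,n}=q_{3,n}/q_{4,n}$, $D_{4,n}=q_{0,n}/q_{4,n}$ coming from Theorem \ref{lemma44}, and clear denominators by multiplying the whole equation through by $q_{1,n}^2(x)\,q_{4,n}(x)$. Using the quotient rule $(q_{2,n}/q_{1,n})'=(q_{2,n}'q_{1,n}-q_{2,n}q_{1,n}')/q_{1,n}^2$ and the analogous expression for $(q_{0,n}/q_{1,n})'$, a direct algebraic simplification converts the three rational coefficients into exactly the polynomial expressions $\mathfrak{P}_{2,n},\mathfrak{P}_{1,n},\mathfrak{P}_{0,n}$ displayed in (\ref{DiffEqn-CoefSobolev}). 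The constant $-1$ in the original identity, after multiplication by $q_{1,n}^2\,q_{4,n}$, produces the summand $-q_{4,n}\,q_{1,n}^2$ appearing at the end of $\mathfrak{P}_{0,n}$.

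Finally, I would verify the degree estimates using the values $\deg q_{0,n}=2d+2$, $\deg q_{1,n}=\deg q_{4,n}=2d$ and $\deg q_{2,n}=\deg q_{3,n}=2d+1$ established in Theorem \ref{lemma44}. Term by term: $q_{1,n}q_{0,n}^2$ has degree exactly $2d+2(2d+2)=6d+4$, so $\deg\mathfrak{P}_{2,n}=6d+4$; each of the four summands of $\mathfrak{P}_{1,n}$, namely $q_{0,n}q_{1,n}q_{2,n}$, $q_{0,n}q_{1,n}q_{3,n}$, $q_{0,n}q_{0,n}'q_{1,n}$ and $q_{0,n}^2 q_{1,n}'$, has degree $6d+3$, yielding $\deg\mathfrak{P}_{1,n}\leqslant 6d+3$; and the three leading summands of $\mathfrak{P}_{0,n}$ have degree $6d+2$, while the remaining $q_{1,n}^2 q_{4,n}$ has degree only $6d$, so $\deg\mathfrak{P}_{0,n}\leqslant 6d+2$.

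The essential observation is that no new tool is needed: the composition argument is standard once the ladder operators are available, and the statement is a purely symbolic consequence of Theorem \ref{lemma44}. The main (and really the only) obstacle is bookkeeping; one must organise the rational-function expansions carefully so that the polynomial form of the final coefficients is manifest, and confirm that each term matches (\ref{DiffEqn-CoefSobolev}) exactly. The degree estimates then follow directly from the degree information provided by Theorem \ref{lemma44}.
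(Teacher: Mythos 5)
Your proposal is correct and follows exactly the paper's own argument: compose the raising and lowering operators via $\mathfrak{L}_n^{\uparrow}[\mathfrak{L}_n^{\downarrow}[S_n]]-S_n=0$, substitute the rational expressions for $A_{4,n},B_{4,n},C_{4,n},D_{4,n}$ from Theorem \ref{lemma44}, and clear denominators by $q_{1,n}^2 q_{4,n}$ to obtain the polynomial coefficients \eqref{DiffEqn-CoefSobolev}. Your degree count, based on $\dgr{q_{0,n}}=2d+2$, $\dgr{q_{1,n}}=\dgr{q_{4,n}}=2d$, $\dgr{q_{2,n}}=\dgr{q_{3,n}}=2d+1$, is also the intended one and checks out term by term.
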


\begin{remark}[The classical Jacobi differential equation] Under the conditions stated  in Remark~\ref{Remark-Operators},   \eqref{GeneralSIP}   becomes to the classical Jacobi inner product and $S_{n}(x) = P_{n}^{\alpha,\beta}$(x).

Note that, here, $A_{1,n}(x)\equiv 1$, $B_{1,n}(x)=0$ and $\rho(x) \equiv 1$. For the rest of the expressions involved in the coefficients of the differential Equation \eqref{DifEq-PolyCoef},  we have
\begin{equation*}
\begin{aligned}
\rho(x)   & \equiv 1, \; A_{1,n}(x)  \equiv  A_{2,n}(x) \equiv D_{2,n} (x)= 1,\; B_{1,n}(x)  \equiv  B_{2,n}(x) \equiv C_{2,n} (x)\equiv 0, \\
\Delta_n(x) & \equiv 1, \; A_{3,n}(x) = \widehat{a}_n(x), B_{3,n}(x)    = \widehat{b}_n,\; C_{3,n}(x)  = - \gamma_{2,n-1}^{-1} \widehat{b}_{n-1} \;\text{ and} \\  D_{3,n}(x)   & = \widehat{a}_{n-1}(x)+\gamma_{2,n-1}^{-1} \widehat{b}_{n-1}(x-\gamma_{1,n-1}).
\end{aligned}
\end{equation*}
Thus,
\begin{equation}\label{DiffEqn-CoefJacob}
\begin{aligned}
q_{0,n}(x)&= \left(1-x^2\right),\; q_{1,n}(x)=\widehat{b}_n,\quad  q_{2,n}(x) =- \widehat{a}_n(x),\\
 q_{3,n}(x)  & = -\widehat{a}_{n-1}(x)-\gamma_{2,n-1}^{-1} \widehat{b}_{n-1} (x-\gamma_{1,n-1})  \\ & = -\left( n+\alpha+\beta\right) x + \frac{\left( n+\alpha+\beta\right) \, \left(\alpha-\beta\right) }{2 n+\beta+\alpha}\;  \text{ and } \\  q_{4,n}(x) & = - \gamma_{2,n-1}^{-1} \widehat{b}_{n-1} = -\left( 2 n+\alpha+\beta-1\right).
\end{aligned}
\end{equation}

Substituting \eqref{DiffEqn-CoefJacob} in \eqref{DiffEqn-CoefSobolev}, the reader can verify that the differential Equation \eqref{DifEq-PolyCoef} becomes \eqref{Jacobi-DiffEqn},  i.e.,
\begin{align*}
\mathfrak{P}_{2,n}(x) &=  \left(1-x^2\right) , \;   \mathfrak{P}_{1,n}(x) =   \beta-\alpha-(\alpha+\beta+2) x \; \text{ and } \;  \mathfrak{P}_{0,n}(x) =  n(n+\alpha+\beta+1).
\end{align*}
\end{remark}

Second, we can obtain the polynomial $n$th degree of the sequence  $\left\{S_{n}\right\}_{n\geqslant  0}$ as the repeated action  ($n$ times) of the raising differential operator on the first Sobolev-type  polynomial of the sequence (i.e., the polynomial of degree zero).

\begin{theorem}
\label{mainresult2}The $n$th  Jacobi-Sobolev polynomial  $S_{n}$ $(n\geqslant  0)$ can be given by%
\begin{equation*}
S_{n}(x)=\left( \mathfrak{L}_{n}^{\uparrow}\mathfrak{L}_{n-1}^{\uparrow}\mathfrak{L}_{n-2}^{\uparrow}\cdots \mathfrak{L}_{1}^{\uparrow}\right) S_{0}(x),
\end{equation*}%
where $S_{0}(x)=1$.
\end{theorem}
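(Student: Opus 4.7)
The proof is essentially a direct induction on $n$, using the raising equation~\eqref{laddereq2rew} already established in Theorem~\ref{lemma44}. The plan is the following.

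The base case is trivial: the monic polynomial of degree zero is $S_0(x) = 1$, matching the right-hand side when the composition is empty. For the inductive step, I would assume that
\[
S_{n-1}(x) = \bigl( \mathfrak{L}_{n-1}^{\uparrow} \mathfrak{L}_{n-2}^{\uparrow} \cdots \mathfrak{L}_{1}^{\uparrow} \bigr) S_{0}(x),
\]
and then apply the operator $\mathfrak{L}_{n}^{\uparrow}$ to both sides. By~\eqref{laddereq2rew}, the left-hand side becomes $S_n(x)$, while the right-hand side becomes $\bigl( \mathfrak{L}_{n}^{\uparrow} \mathfrak{L}_{n-1}^{\uparrow} \cdots \mathfrak{L}_{1}^{\uparrow} \bigr) S_{0}(x)$, which closes the induction.

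The only subtlety to address is that the coefficients $C_{4,n}(x) = q_{3,n}(x)/q_{4,n}(x)$ and $D_{4,n}(x) = q_{0,n}(x)/q_{4,n}(x)$ of $\mathfrak{L}_n^{\uparrow}$ are rational, so one might worry that applying the operator produces a rational function rather than a polynomial. However, this is not actually an issue, because equation~\eqref{laddereq2rew} was proved to hold as an identity of functions: when $\mathfrak{L}_n^{\uparrow}$ acts on $S_{n-1}$ the resulting rational expression simplifies exactly to the polynomial $S_n$. Hence the induction is justified with no additional work, and the main (and only) obstacle, the polynomial nature of the output, is already handled by Theorem~\ref{lemma44}.
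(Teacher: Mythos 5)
Your proof is correct and follows essentially the same route as the paper: induction on $n$ driven by the raising equation~\eqref{laddereq2rew}, with the base case handled by applying $\mathfrak{L}_1^{\uparrow}$ to $S_0$. The paper's own proof is just a terser statement of the same argument, and your added remark that the rational coefficients of $\mathfrak{L}_n^{\uparrow}$ cause no trouble (since~\eqref{laddereq2rew} already guarantees a polynomial output) is a reasonable clarification rather than a departure.
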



\begin{proof} Using \eqref{laddereq2rew}, the theorem follows  for $n=1$. Next, the expression for $S_{n}$ is a straightforward consequence of the definition
of the raising operator. \end{proof}

To conclude this section, we prove an interesting three-term recurrence relation with rational coefficients, which satisfies the Jacobi-Sobolev monic polynomials. From the
explicit expression of the ladder operators, shifting $n$ to $n+1$ in \eqref{laddereq2rew}, we obtain
\begin{align*}
C_{4,n}(x)S_{n}(x)+D_{4,n}(x) \frac{d}{dx}S_{n}(x) &=S_{n-1}(x), \\
A_{4,n}(x) S_{n}(x)+B_{4,n}(x)\frac{d}{dx}S_{n}(x) &= S_{n+1}(x).
\end{align*}
Next, we multiply the first equation by $-B_{4,n}(x)$\ and the second equation by $D_{4,n}(x)$, and adding two resulting equations, we have the following three-term recurrence reaction with rational coefficients for the Jacobi-Sobolev monic orthogonal polynomials.

\begin{theorem} Under the assumptions of Theorem \ref{Poly-HolEq}, we have the recurrence relation

\begin{equation}\label{3TRR-Sob}
\begin{aligned}
 {q_{4,n+1}(x)q_{0,n}(x)} S_{n+1}(x) =&  \left[ q_{3,n+1}(x)q_{0,n}(x) - q_{2,n}(x)q_{0,n+1}(x) \right]S_{n}(x)\\ & + {q_{1,n}(x)q_{0,n+1}(x)} S_{n-1}(x),
\end{aligned}%
\end{equation}
where the explicit formula of the coefficient is given in Theorem  \ref{lemma44}.
\end{theorem}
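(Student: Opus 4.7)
The statement is a pure consequence of the ladder equations already proved, with the right-hand side derivatives eliminated by linear combination. The plan is therefore to combine \eqref{laddereq1rew} at index $n$ with \eqref{laddereq2rew} shifted by $n\mapsto n+1$, kill the $S_n'(x)$ term, and then clear denominators using the explicit rational forms of $A_{4,n},B_{4,n},C_{4,n},D_{4,n}$ from Theorem~\ref{lemma44}.

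More concretely, I start from the pair
\[
A_{4,n}(x)\,S_n(x)+B_{4,n}(x)\,S_n'(x)=S_{n-1}(x),\qquad
C_{4,n+1}(x)\,S_n(x)+D_{4,n+1}(x)\,S_n'(x)=S_{n+1}(x),
\]
the first being \eqref{laddereq1rew} and the second \eqref{laddereq2rew} after the index shift $n\mapsto n+1$. To eliminate $S_n'(x)$, I multiply the first identity by $D_{4,n+1}(x)$ and the second by $B_{4,n}(x)$ and subtract, obtaining
\[
B_{4,n}(x)\,S_{n+1}(x)=\bigl(B_{4,n}(x)C_{4,n+1}(x)-D_{4,n+1}(x)A_{4,n}(x)\bigr)S_n(x)+D_{4,n+1}(x)\,S_{n-1}(x).
\]

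Next I substitute the expressions
\[
A_{4,n}=\frac{q_{2,n}}{q_{1,n}},\quad B_{4,n}=\frac{q_{0,n}}{q_{1,n}},\quad C_{4,n+1}=\frac{q_{3,n+1}}{q_{4,n+1}},\quad D_{4,n+1}=\frac{q_{0,n+1}}{q_{4,n+1}},
\]
from Theorem~\ref{lemma44}, and multiply through by the common denominator $q_{1,n}(x)q_{4,n+1}(x)$. After collecting, the coefficient of $S_{n+1}(x)$ becomes $q_{4,n+1}(x)q_{0,n}(x)$, the coefficient of $S_{n-1}(x)$ becomes $q_{1,n}(x)q_{0,n+1}(x)$, and the coefficient of $S_n(x)$ becomes $q_{0,n}(x)q_{3,n+1}(x)-q_{0,n+1}(x)q_{2,n}(x)$, which is exactly \eqref{3TRR-Sob}.

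There is no real obstacle here: the derivation is entirely algebraic and relies only on identities already established. The one small subtlety worth noting is that the elimination is carried out as an identity of rational functions of $x$, so the final polynomial relation holds on the complement of the zero sets of $q_{1,n}$ and $q_{4,n+1}$; since both sides are polynomials, the identity extends to all $x\in\mathbb{R}$ by continuity. No degree check is required beyond what Theorem~\ref{lemma44} already supplies, and the classical three-term recurrence \eqref{Jacob3TRR} is recovered, as a sanity check, by plugging in the specializations listed in \eqref{DiffEqn-CoefJacob}.
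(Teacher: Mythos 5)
Your proposal is correct and takes essentially the same route as the paper: both combine the lowering equation \eqref{laddereq1} at index $n$ with the raising equation \eqref{laddereq2} at index $n+1$ to eliminate $S_n'(x)$ and then clear the denominators $q_{1,n}q_{4,n+1}$, yielding exactly \eqref{3TRR-Sob}. The only difference is cosmetic (you eliminate the derivative at the level of the rational coefficients $A_{4,n},\dots,D_{4,n+1}$ before substituting, while the paper first rewrites the ladder equations in polynomial form), so no further comment is needed.
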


\begin{proof}
From  \eqref{laddereq1}, and   \eqref{laddereq2} for $n+1$,  we have
\begin{align*}
q_{2,n}(x)S_{n}(x)+q_{0,n}(x)(x)S_{n}^{\prime}(x) & = q_{1,n}(x)S_{n-1}(x).\\
q_{3,n+1}(x)S_{n}(x)+q_{0,n+1}(x)S_{n}^{\prime}(x) & = q_{4,n+1}(x)S_{n}(x).
\end{align*}

Multiplying by $q_{0,n+1}(x)$ and $q_{0,n}(x)$, respectively, we subtract both equations to eliminate the derivative term obtaining
\begin{align*}
& \left(q_{3,n+1}(x)q_{0,n}(x)-q_{2,n}(x)q_{0,n+1}(x)\right)S_n(x) \\
& = q_{4,n+1}(x)q_{0,n}(x)S_{n+1}(x)-q_{1,n+1}(x)q_{0,n+1}(x)S_{n-1}(x),
\end{align*}
which is the required formula.
\end{proof}

\begin{remark}[The classical Jacobi three-term recurrence relation] Under the assumptions of  Remark \ref{Remark-Operators},   substituting \eqref{DiffEqn-CoefJacob} in \eqref{3TRR-Sob}, the reader can verify that the three-term recurrence relation~\eqref{3TRR-Sob} becomes \eqref{DifEq-PolyCoef},  i.e.,
\begin{align*}
\frac{q_{3,n+1}(x)q_{0,n}(x) - q_{2,n}(x)q_{0,n+1}(x)}{q_{4,n+1}(x)q_{0,n}(x)} & = x-\gamma_{1,n} \quad \text{and} \quad \frac{q_{1,n}(x)q_{0,n+1}(x)}{q_{4,n+1}(x)q_{0,n}(x)}  = -\gamma_{2,n}.
\end{align*}
\end{remark}

\section{Electrostatic Interpretation}\label{SecElectro}

Let us begin by recalling the  definition of a sequentially ordered  Sobolev inner product, which was stated in \cite{DiPiPe20} (Definition 1) or \cite{DiPiQui22} (Definition 1).

\begin{definition}\label{Set-SOrdered-General}
Let $\{(r_j,\nu_j)\}_{j=1}^M\!\subset \!\RR\!\times\!\ZZp$ be a finite sequence of $M$ ordered pairs and $A\subset \RR$. We say that $\{(r_j,\nu_j)\}_{j=1}^M $ is  sequentially ordered with respect to $A$, if
\begin{enumerate}
\item $0\leqslant \nu_1\leqslant \nu_2\leqslant \cdots \leqslant \nu_M$.
\item $r_k\notin\inter{\ch{A \cup\{r_1,r_2,\dots,r_{k-1}\}}}\dsty$ for $k=1,2,\dots,M$, where $\inter{\ch{B}}$ denotes the interior of the convex hull of an arbitrary set $B\subset \CC$.
\end{enumerate}

If $A=\emptyset$, we say that  $\{(r_j,\nu_j)\}_{j=1}^M $ is  sequentially ordered   for brevity.

We say that the discrete Sobolev inner product \eqref{GeneralSIP} is  sequentially ordered  if the set of ordered pairs $\{(c_j,i): 1\leqslant j\leqslant N, 0\leqslant i\leqslant d_j \text{ and } \eta_{j,i}>0 \}$ may be arranged to form a finite sequence of ordered pairs, which is sequentially ordered with respect to $(-1,1)$.
\end{definition}

 From the second condition of Definition \ref{Set-SOrdered-General}, the coefficient $\lambda_{j,d_j}$ is the only coefficient $\lambda_{j,i}$ ($i=0,1,\dots,d_j$) different from zero,  for each $j=1,2,\dots,N$. Hence,  \eqref{GeneralSIP} takes the form
\begin{align}\label{IP-Sobolev-SO}
\IpS{f}{g}=  \int_{-1}^{1} f(x) g(x) \,d\mu^{\alpha,\beta}(x) +\sum_{j=1}^{N} \lambda_{j,d_j} \,f^{(d_j)}(c_{j}) g^{(d_j)}(c_{j}),
\end{align}
where $d\mu^{\alpha,\beta}(x)=(1-x)^{\alpha}(1+x)^{\beta} dx$, with $\alpha,\beta>-1$.

Hereinafter, we will restrict our attention to sequentially ordered discrete Sobolev inner products. The following two lemmas  show  our reasons for this restriction.

\begin{lemma}[{\cite[Th. 1]{DiPiPe20}  and \cite[Prop. 4]{DiPiQui22}}]
\label{Th_ZerosSimp}  If \eqref{IP-Sobolev-SO}   is  a sequentially ordered discrete Sobolev inner product, then  $S_n$ has at least $n-N$ changes of sign on $(-1,1)$.
\end{lemma}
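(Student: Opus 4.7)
I would argue by contradiction. Assume $S_n$ has only $k<n-N$ sign changes on $(-1,1)$, at points $x_1<\cdots<x_k$, and set $q(x)=\prod_{i=1}^{k}(x-x_i)$, so that $S_n(x)q(x)$ has constant sign on $(-1,1)$ and vanishes there only at the $x_i$.

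The plan is to build a polynomial $h$ of degree at most $N$ such that the test polynomial $T(x)=q(x)h(x)$ satisfies three properties simultaneously: (a) $\dgr{T}\leqslant k+N<n$, so that Sobolev orthogonality of $S_n$ forces $\IpS{S_n}{T}=0$; (b) $T^{(d_j)}(c_j)=0$ for every $j=1,\dots,N$, making the discrete part of $\IpS{S_n}{T}$ vanish identically; and (c) $h$ does not change sign on $(-1,1)$, so that $S_n T=(S_n q)h$ keeps constant sign on $(-1,1)$ and the continuous part $\int_{-1}^{1}S_n(x)T(x)\,d\mu^{\alpha,\beta}(x)$ is nonzero. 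Taken together these three items give $0=\IpS{S_n}{T}=\int_{-1}^{1}S_n T\,d\mu^{\alpha,\beta}\neq 0$, the required contradiction.

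To produce $h$ I would invoke the sequentially ordered hypothesis: arrange the pairs $(c_j,d_j)$ so that $d_1\leqslant\cdots\leqslant d_N$ and $c_j\notin\inter{\ch{(-1,1)\cup\{c_1,\dots,c_{j-1}\}}}$ for each $j$. Starting from $h_0\equiv1$, I would define recursively $h_j(x)=h_{j-1}(x)(x-a_j)$ and choose $a_j\in\RR$ so as to force $(qh_j)^{(d_j)}(c_j)=0$. By Leibniz's rule,
$$(qh_j)^{(d_j)}(c_j)=(c_j-a_j)\,(qh_{j-1})^{(d_j)}(c_j)+d_j\,(qh_{j-1})^{(d_j-1)}(c_j),$$
so $a_j$ is uniquely determined whenever $(qh_{j-1})^{(d_j)}(c_j)\neq 0$ (the degenerate case can be handled by taking $a_j=c_j$ and invoking the monotonicity of the $d_j$). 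The sequential-ordering hypothesis, combined with the monotone increase of the $d_j$'s, is then exactly what is needed to ensure that this $a_j$ lies outside $\inter{\ch{(-1,1)\cup\{a_1,\dots,a_{j-1}\}}}$, and in particular outside $(-1,1)$. Consequently the $N$ zeros of $h=h_N$ accumulate outside $(-1,1)$, and $h$ has constant sign there.

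The main obstacle is precisely this last geometric claim: verifying that the value of $a_j$ produced by the Leibniz prescription genuinely falls outside the forbidden interval at each inductive step. The signs and relative sizes of the two Leibniz terms must be tracked carefully, exploiting that the previously placed zeros $a_1,\dots,a_{j-1}$ together with $(-1,1)$ lie on one side of $c_j$ by the sequentially ordered hypothesis. This is the delicate point where the assumption of sequential ordering (Definition~\ref{Set-SOrdered-General}) is genuinely used; without it, some $a_j$ could drift into $(-1,1)$, destroying property~(c) and dissolving the contradiction.
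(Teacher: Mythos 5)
Your overall strategy (a test polynomial $T=qh$ of degree $<n$ that kills the discrete part of the inner product while $S_nT$ keeps a constant sign on $(-1,1)$, contradicting $\IpS{S_n}{T}=0$) is the right one, and it is essentially the strategy of the cited sources; note the paper itself gives no proof but quotes the result from \cite{DiPiPe20} and \cite{DiPiQui22}, where $T$ is taken from the kernel of the linear map $T\mapsto\bigl(T(x_1),\dots,T(x_k),T^{(d_1)}(c_1),\dots,T^{(d_N)}(c_N)\bigr)$ on $\PP_{k+N}$ and a Rolle-type zero-counting lemma for sequentially ordered configurations shows that such a $T$ has no zeros in $(-1,1)$ beyond the $x_i$.

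Your recursive construction of $h$, however, has a fatal flaw: the imposed conditions do not persist. At step $j$ you choose $a_j$ so that $(qh_j)^{(d_j)}(c_j)=0$, but the final test polynomial is $T=qh_N$, and by the very Leibniz identity you quote, for $i<N$,
\begin{equation*}
(qh_{i+1})^{(d_i)}(c_i)=(c_i-a_{i+1})\,(qh_i)^{(d_i)}(c_i)+d_i\,(qh_i)^{(d_i-1)}(c_i)=d_i\,(qh_i)^{(d_i-1)}(c_i),
\end{equation*}
which is generically nonzero whenever $d_i\geqslant 1$; the later factors destroy the earlier derivative conditions. Concretely, with $q(x)=x^2-\tfrac14$, $(c_1,d_1)=(2,1)$, $(c_2,d_2)=(3,1)$ (a sequentially ordered configuration), your recipe gives $a_1=2.9375$ and then $a_2\approx 3.06$, and the resulting $T$ satisfies $T'(3)=0$ but $T'(2)=(qh_1)(2)\approx-3.52\neq 0$; the discrete part of $\IpS{S_n}{T}$ no longer vanishes and the contradiction evaporates. (Your scheme survives only when every $d_j=0$, since then each condition is an actual zero of $T$ and zeros persist under multiplication; imposing full vanishing $T(c_j)=\dots=T^{(d_j)}(c_j)=0$ would persist but costs degree $d_j+1$ per point and only yields the weaker bound $n-d$ already noted in the introduction.) There are secondary gaps as well: $(qh_{j-1})^{(d_j)}(c_j)$ can vanish and your fallback $a_j=c_j$ does not then produce $(qh_j)^{(d_j)}(c_j)=0$; and locating $a_j$ outside $(-1,1)$ needs $c_j$ to lie outside $\inter{\ch{(-1,1)\cup\{a_1,\dots,a_{j-1}\}}}$, whereas sequential ordering only controls the positions of the $c_i$, not of the displaced roots $a_i$. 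The repair is to impose all $N$ conditions simultaneously on $T$ (a nonzero solution exists by dimension count) and to control the location of its roots via the counting lemma of \cite{DiPiPe20}, rather than placing one root at a time.
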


\begin{lemma}[{\cite[Lem. 3.4]{DiPiPe20}   and \cite[Th. 7]{DiPiQui22}}] \label{AsymCBounded_Lemma} Let   \eqref{IP-Sobolev-SO} be a sequentially ordered Sobolev inner product. Then, for all $n$ sufficiently large,   each sufficiently small neighborhood of  $c_j$, $j=1,\dots,N$, contains exactly one zero of $S_n$, and the remaining $n-N$ zeros lie on $(-1,1)$.
   \end{lemma}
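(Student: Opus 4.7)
The plan is to combine the sign-change count from Lemma \ref{Th_ZerosSimp} with an asymptotic localization of the remaining zeros via Hurwitz's theorem. By Lemma \ref{Th_ZerosSimp}, $S_n$ has at least $n-N$ sign changes (hence simple zeros) on $(-1,1)$, and since $\dgr{S_n}=n$, at most $N$ zeros of $S_n$, counted with multiplicity, can lie in $\CC\setminus(-1,1)$. It therefore suffices to show that for each $j$ some zero of $S_n$ accumulates at $c_j$, and that no zero lies outside the union of small neighborhoods of $c_1,\dots,c_N$ and $[-1,1]$.

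For the localization I would use the connection formula \eqref{FConexJ-S}, namely $S_n(x)=A_{1,n}(x)P_n^{\alpha,\beta}(x)+B_{1,n}(x)P_{n-1}^{\alpha,\beta}(x)$, together with the classical ratio asymptotics
\[
P_{n-1}^{\alpha,\beta}(x)/P_n^{\alpha,\beta}(x)\longrightarrow 1/\varphi(x)
\]
uniformly on compact subsets of $\CC\setminus[-1,1]$, where $\varphi(x)=x+\sqrt{x^2-1}$ is the branch with $|\varphi(x)|>1$. This reduces the question to identifying the limit of $S_n(x)/P_n^{\alpha,\beta}(x)$ as $n\to\infty$. The coefficients $A_{1,n}$ and $B_{1,n}$ depend on the unknowns $S_n^{(k)}(c_j)$, which are governed by the linear system \eqref{solve1}; their asymptotic behavior is obtained from the growth rates of the kernel entries $K_{n-1}^{(\ell,k)}(c_i,c_j)$. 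Since each $c_j$ lies outside $[-1,1]$, the diagonal kernel terms grow like $|\varphi(c_j)|^{2n}$, which allows one to extract a well-defined limit $G(x)\isdef\lim_{n\to\infty} S_n(x)/P_n^{\alpha,\beta}(x)$ on compact subsets of $\CC\setminus[-1,1]$. A direct computation, simplified by the sequential ordering hypothesis (which forces $\lambda_{j,i}=0$ for $i<d_j$), should show that $G$ is meromorphic on $\CC\setminus[-1,1]$ with zeros precisely at $c_1,\dots,c_N$, each simple.

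Hurwitz's theorem then yields, for all $n$ sufficiently large, exactly one zero of $S_n$ in every sufficiently small disc around each $c_j$ and none in any compact subset of $\CC\setminus\bigl([-1,1]\cup\{c_1,\dots,c_N\}\bigr)$. Combined with the first step this pins down all $n$ zeros and proves the lemma.

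The main obstacle is the ratio-asymptotic analysis of the coupled linear system \eqref{solve1}. One must show that the normalized unknowns $\lambda_{j,d_j}S_n^{(d_j)}(c_j)/\|P_{n-1}^{\alpha,\beta}\|_{\mu}^{\,2}$ converge to explicit limits that, when substituted into the connection formula, produce a factor proportional to $\prod_{j=1}^N(x-c_j)$ in the limit $G$. The sequentially ordered assumption is essential here, since it decouples the contributions of the distinct $c_j$ and makes transparent the leading-order cancellations responsible for the zero structure of $G$; without it, several $\lambda_{j,k}$ per point $c_j$ may contribute and the limit need not have the required form (in line with the counterexample behaviour noted in the introductory example where zeros can escape beyond the $c_j$).
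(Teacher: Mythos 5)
This lemma is not proved in the paper at all: it is imported verbatim from \cite[Lem.~3.4]{DiPiPe20} and \cite[Th.~7]{DiPiQui22}, so there is no in-paper argument to compare against. Judged on its own, your outline follows the same strategy as those references (and the classical one of \cite{LopMarVan95}): combine the sign-change count of Lemma \ref{Th_ZerosSimp}, which caps the number of zeros outside $(-1,1)$ at $N$, with an outer relative asymptotic $S_n/P_n^{\alpha,\beta}\to G$ on compact subsets of $\CC\setminus[-1,1]$ and Hurwitz's theorem to produce one zero near each $c_j$. The counting logic in your first and third paragraphs is sound.

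However, the proposal has a genuine gap, and you name it yourself: the entire weight of the argument rests on the claim that $G=\lim_n S_n/P_n^{\alpha,\beta}$ exists locally uniformly on $\CC\setminus[-1,1]$ and has a \emph{simple} zero at each $c_j$ and no others, and you only assert that this ``should'' follow from an analysis of the linear system \eqref{solve1}. That analysis is the theorem, not a routine verification: the entries $K_{n-1}^{(\ell,k)}(c_i,c_j)$ grow at different exponential rates $|\varphi(c_i)\varphi(c_j)|^{n}$, so extracting the limits of the normalized unknowns $\lambda_{j,d_j}k!\,S_n^{(d_j)}(c_j)/h^{\alpha,\beta}_{n-1}$ requires a careful rescaling of \eqref{Solut1}, and one must then verify that the resulting limit of $A_{1,n}+B_{1,n}\varphi^{-1}$ vanishes to order exactly one at each $c_j$ (for higher derivative orders $d_j$ the naive guess of a higher-order zero is wrong, and proving simplicity is precisely where the sequential-ordering hypothesis and the positivity of $\mathcal{I}_{d^*}+\mathcal{K}_{n-1}(\mathcal{C},\mathcal{C})\mathcal{L}$ enter). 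Two smaller points: Hurwitz also needs $G\not\equiv 0$, and the case $c_j=\pm 1$ (permitted, since $c_j\in\RR\setminus(-1,1)$) is not covered by an argument that works only on $\CC\setminus[-1,1]$. As it stands the submission is a correct roadmap to the cited proof rather than a proof.
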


As the coefficient of  $S_n$ is real, under the same hypotheses of  Lemma \ref{AsymCBounded_Lemma},   for all $n$ sufficiently large,  the zeros of $S_n$ are real and simple.

In the rest of this section, we will assume that the zeros of $S_n$ are simple. Note that  sequentially ordered  Sobolev inner products provide us with a wide class of Sobolev inner products  such that the zeros of the corresponding orthogonal polynomials are simple.  Therefore, for all $n$ sufficiently large, we have
\begin{equation*}
\begin{array}{lll}
 S_{n}^{\prime}(x)={\displaystyle\sum\limits_{i=1}^{n}} \prod\limits_{\substack{ j=1,  \\ j\neq i}}^{n}{(x-x_{n,j})}, &  &
 S_{n}^{\prime\prime}(x)={\displaystyle\sum\limits_{i=1}^{n}\sum\limits_{\substack{ j=1,  \\ j\neq i}}^{n}\prod\limits_{\substack{ l=1,  \\ i\neq j\neq l}}^{n}(x-x_{n,l})}, \\
&  &  \\
S_{n}^{\prime}(x_{n,k})={\displaystyle\prod\limits_{\substack{ j=1, \\ j\neq k}}^{n}(x_{n,k}-x_{n,j})}, &  & S_{n}^{\prime\prime}(x_{n,k})={\displaystyle2\sum\limits_{\substack{ i=1, \\ i\neq k}}^{n}\prod\limits_{\substack{ j=1,  \\ i\neq j\neq k}}^{n}(x_{n,k}-x_{n,j})}.
\end{array}
\end{equation*}

Now we evaluate the polynomials $\mathfrak{P}_{2,n}(x),\; \mathfrak{P}_{1,n}(x) $, and $\mathfrak{P}_{0,n}(x) $ in \eqref{DifEq-PolyCoef} at $x_{n,k}$, where $\left\{ x_{n,k}\right\}_{k=1}^{n}$ are the zeros of $S_{n}(x)$ arranged in an increasing order. Then, for $k=1,2,\dots,n$, we obtain \vspace{6pt}
\begin{align}\nonumber
0=& \mathfrak{P}_{2,n}(x_{n,k}) S_{n}^{\prime\prime}(x_{n,k})+ \mathfrak{P}_{1,n}(x_{n,k})  S_{n}^{\prime}(x_{n,k})+\mathfrak{P}_{0,n}(x_{n,k})  S_{n}(x_{n,k})\\
\nonumber =&  \mathfrak{P}_{2,n}(x_{n,k}) S_{n}^{\prime\prime}(x_{n,k})+ \mathfrak{P}_{1,n}(x_{n,k})  S_{n}^{\prime}(x_{n,k}).\\ \label{CritPoint}
0= &\frac{ S_{n}^{\prime\prime}(x_{n,k})}{ S_{n}^{\prime}(x_{n,k})} +\frac{\mathfrak{P}_{1,n}(x_{n,k})}{\mathfrak{P}_{2,n}(x_{n,k}) }=2  \sum_{{{i=1} \atop {i \neq k}}}^{n} {\frac{1}{x_{n,k}-x_{n,i}}} +\frac{\mathfrak{P}_{1,n}(x_{n,k})}{\mathfrak{P}_{2,n}(x_{n,k}) }.
\end{align}

Let us recall that, from \eqref{EqusLemma5},
\begin{align*}
\varphi_{1,n}(x) & =  \frac{\Delta_{1,n}(x)}{\rho_{d-N}(x)}, \quad \dgr{\varphi_{1,n}}=d+N,\\
\varphi_{2,n}(x)& =  \frac{\Delta_{2,n}(x)}{\rho_{d-N}(x)}, \quad \dgr{\varphi_{2,n}}=d+N+1 ,\\
\varphi_{3,n}(x) & =   \frac{\Delta_{3,n}(x)}{\rho_{d-N}(x)}, \quad \dgr{\varphi_{3,n}}=d+N+1 .
\end{align*}

Hence, from Theorems \ref{lemma44} and \ref{Poly-HolEq}  and Lemma \ref{LemmaDeterminant},
\begin{align}\nonumber
  \frac{ \mathfrak{P}_{1,n}(x) }{ \mathfrak{P}_{2,n}(x) }=& \frac{q_{1,n}(x)q_{2,n}(x)+q_{1,n}(x)q_{3,n}(x)+q_{0,n}^{\prime}(x) q_{1,n}(x)-q_{0,n}(x) q_{1,n}^{\prime}(x)}{q_{1,n}(x) q_{0,n}(x)}\\ \nonumber
  =&\frac{q_{2,n}(x)+q_{3,n}(x)}{q_{0,n}(x)}+  \frac{ q_{0,n}^{\prime}(x)}{q_{0,n}(x)}- \frac{ q_{1,n}^{\prime}(x)}{q_{1,n}(x)}\\ \nonumber
  =& 2\frac{\rho^{\prime}(x)}{\rho(x)}+\frac{\Delta_{2,n}(x)+\Delta_{3,n}(x)}{ \left(1-x^2\right)\rho(x)\delta_n(x)}+\frac{ \Delta_{n}^{\prime}(x)}{\Delta_{n}(x)}+\frac{2x}{x^2-1}- \frac{ \Delta_{1,n}^{\prime}(x)}{\Delta_{1,n}(x)}\\ \nonumber
    =& 3\frac{\rho^{\prime}(x)}{\rho(x)}+\frac{\varphi_{2,n}(x)+\varphi_{3,n}(x)}{ \left(1-x^2\right)\rho_N(x)\delta_n(x)}+\frac{ \delta_{n}^{\prime}(x)}{\delta_{n}(x)}+\frac{1}{x-1} +\frac{1}{x+1}\\ &- \frac{ \varphi_{1,n}^{\prime}(x)}{\varphi_{1,n}(x)}-\frac{ \rho_{d-N}^{\prime}(x)}{\rho_{d-N}(x)}. \label{DescompCociente}
  \end{align}

Let us write  $\dsty  \frac{\rho^{\prime}(x)}{\rho(x)}  = \sum_{j=1}^{N} \frac{d_j+1}{x-c_j}.\qquad \frac{\rho_{d-N}^{\prime}(x)}{\rho_{d-N}(x)}  = \sum_{j=1}^{N} \frac{d_j}{x-c_j}.$

 As $\dsty \psi_1(x)=\varphi_{2,n}(x)+\varphi_{3,n}(x)$ and  $\dsty \psi_2(x)=\left(1-x^2\right)\rho_N(x)\delta_n(x)$ are polynomials of the degree $d+N+1$ and $d+N+2$, respectively, we have that $\dsty  \frac{\psi_1(x)}{ \psi_2(x)}$  is a rational proper fraction. Therefore,
$$
\frac{\psi_1(x)}{ \psi_2(x)} =- \frac{r(1)}{x-1}+\frac{r(-1)}{x+1}+\sum_{j=1}^{N} \frac{r(c_j)}{x-c_j}+\sum_{j=1}^{d} \frac{r(u_j)}{x-u_j}, \quad \text{where } r(x)=\frac{\psi_1(x)}{ \psi_2^{\prime}(x)}.
$$

Based on the results of our numerical experiments, in the remainder of the section, we will assume certain restrictions with respect to some  functions and parameters involved in~\eqref{DescompCociente}. In that sense, we suppose that
\begin{enumerate}
  \item The zeros of $\delta_{n}$ are real, simple,  and different from $x_{n,k}$ for all $k=1,\dots,n$. Therefore, $\dsty \delta_{n}(x)=  \prod_{k=1}^{d} (x-u_j)$, where $u_i\neq u_j$ if $i \neq j$,  and   $ \dsty \frac{\delta_{n}^{\prime}(x)}{\delta_{n}(x)}   = \sum_{j=1}^{d} \frac{1}{x-u_j}.$
  \item Let $\dsty \varphi_{1,n}(x)=  \kappa_{1}\prod_{j=1}^{N_1} (x-e_{j})^{\ell_{5,j}}$, where  $e_{j} \in \CC \setminus {\ch{[-1,1]\cup\{c_1,\dots,c_{N}\}}}$ for all $j=1,\dots, N-1$, and  $\dsty \sum_{j=1}^{N_1}\ell_{5,j}=d+N$. Therefore, $\dsty  \frac{\varphi_{1,n}^{\prime}(x)}{\varphi_{1,n}(x)}   = \sum_{j=1}^{N_1} \frac{\ell_{5,j}}{x-e_{j}}.$

 \item Substituting into \eqref{DescompCociente}  the previous decompositions, we have
$$
  \frac{ \mathfrak{P}_{1,n}(x) }{ \mathfrak{P}_{2,n}(x) }= \frac{\ell_{1}}{x-1}+\frac{\ell_{2}}{x+1}+\sum_{j=1}^{N} \frac{\ell_{3,j}}{x-c_j}+\sum_{j=1}^{d} \frac{\ell_{4,j}}{x-u_j}-  \sum_{j=1}^{N_1} \frac{\ell_{5,j}}{x-e_{j}},
$$where  $\ell_{1}=1-r(1)$, $\ell_{2}=1+r(-1) $, $\ell_{3,j}=2d_j+r(c_j)+3$, and   $\ell_{4,j}= r(u_j)+1$. We will assume that  $\ell_{1},\ell_{2},\ell_{3,j},\ell_{4,j}\geqslant 0$.
\end{enumerate}

From  \eqref{CritPoint}, for $k=1,\dots, n$,
\begin{align}\nonumber
 0=&\;   \sum_{{{i=1} \atop {i \neq k}}}^{n} {\frac{1}{x_{n,k}-x_{n,i}}} +\frac{\ell_{1}}{2}\,\frac{1}{x_{n,k}-1}+\frac{\ell_{2}}{2} \, \frac{1}{x_{n,k}+1} \\ & \,+\frac{1}{2}\sum_{j=1}^{N} \frac{\ell_{3,j}}{x_{n,k}-c_j}+\frac{1}{2}\sum_{j=1}^{d} \frac{\ell_{4,j}}{x_{n,k}-u_j}+\frac{1}{2}\sum_{j=1}^{N_1} \frac{\ell_{5,j}}{e_{j}-x_{n,k}}.  \label{PtosCrit-Potencial}
\end{align}

Let $\dsty \overline{\omega}=  (\omega_1,\omega_2,\cdots,\omega_n)$, $\overline{x}_n=  (x_{n,1},x_{n,2},\cdots,x_{n,n})$ and denote
\begin{align}  \label{LogPotential}
E(\overline{\omega}) := & \;\sum_{1\leq k<j \leq n }\log{\frac{1}{|\omega_j-\omega_k|}} +F(\overline{\omega})+ G(\overline{\omega}), \;  \\  \nonumber
F(\overline{\omega}):=&\frac{1}{2}\sum_{k=1}^{n}\left(\log{\frac{1}{|1-\omega_k|^{\ell_1}}} +\log{\frac{1}{|1+\omega_k|^{\ell_2}}} +\sum_{j=1}^{N}  \log{\frac{1}{|c_j-\omega_k|^{\ell_{3,j}}}}\right), \\
G(\overline{\omega}):=&\frac{1}{2}\sum_{k=1}^{n}\left(   \sum_{j=1}^{d}   \log{\frac{1}{|u_j-\omega_k|^{\ell_{4,j}}}}+  \sum_{j=1}^{N_1}  \log{\frac{1}{|e_j-\omega_k|^{\ell_{5,j}}}}\right).\nonumber
\end{align}

Let us introduce the following electrostatic interpretation:

\begin{quotation}
\emph{Consider the system of $n$ movable positive unit charges at $n$ distinct points of the real line, $\{ \omega_1,\omega_2,\cdots,\omega_n\}$, where their interaction obeys the logarithmic potential law (that is, the force is inversely proportional to the relative distance)  in the presence of the total external potential $V_n(\overline{\omega})=F(\overline{\omega})+ G(\overline{\omega})$. Then, $E(\overline{\omega})$ is the total energy  of this system. }
\end{quotation}

Following the notations   introduced in \cite{Ism00-A} (Section 2), the Jacobi-Sobolev inner product creates two external fields. One is a
long-range field whose potential is $F(\overline{\omega})$, and the other    is a short-range field whose potential is  $G(\overline{\omega})$. Therefore, the total external potential $V_n(\overline{\omega})$  is the sum of the short- and long-range potentials, which is dependent on $n$ (i.e., varying external potential).

Therefore, for each $k=1,\dots, n$, we have $\dsty \frac{\partial  E}{\partial \omega_k}(\overline{x}_n)=   0$;  i.e.,  the zeros of $S_n$ are the zeros  of  the gradient  of the total potential of energy $E(\overline{\omega})$ ($\nabla E(\overline{x}_n)=0$).

\begin{theorem}\label{PositiveHessian}
The zeros of $S_n(x)$ are a local minimum of $E(\overline{\omega})$,  if  for all $k=1,\dots, n;$
\begin{enumerate}
  \item $\dsty \frac{\partial  E}{\partial \omega_k}(\overline{x}_n)=   0$.
  \item $\dsty \frac{\partial^2  V_n}{\partial w_k^2}(\overline{x}_n) =\frac{\partial^2  F}{\partial w_k^2}(\overline{x}_n)+\frac{\partial^2 G}{\partial w_k^2}(\overline{x}_n) > 0. $
\end{enumerate}

\end{theorem}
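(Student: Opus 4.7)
The approach is the standard second-derivative test: condition (1) already identifies $\overline{x}_n$ as a critical point of $E$, so it remains to show that the Hessian $H=\bigl(\partial^2 E/\partial\omega_k\partial\omega_j\bigr)_{k,j}$ evaluated at $\overline{x}_n$ is positive definite. The whole argument will hinge on the observation that the pairwise logarithmic interaction produces, at a critical point with distinct coordinates, a symmetric matrix that is strictly diagonally dominant as soon as the external potential contributes a positive amount on the diagonal.

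First I would differentiate the energy in \eqref{LogPotential} coordinatewise. Writing
\[
E(\overline{\omega})=\sum_{1\le k<j\le n}\log\frac{1}{|\omega_j-\omega_k|}+V_n(\overline{\omega}),
\qquad V_n=F+G,
\]
one obtains immediately
\[
\frac{\partial^2 E}{\partial\omega_k\partial\omega_j}(\overline{\omega})=-\frac{1}{(\omega_k-\omega_j)^2}\quad(j\neq k),
\qquad
\frac{\partial^2 E}{\partial\omega_k^2}(\overline{\omega})=\sum_{\substack{i=1\\ i\neq k}}^{n}\frac{1}{(\omega_k-\omega_i)^2}+\frac{\partial^2 V_n}{\partial\omega_k^2}(\overline{\omega}).
\]
Since by Lemma \ref{AsymCBounded_Lemma} the zeros $x_{n,1}<x_{n,2}<\cdots<x_{n,n}$ are real and simple for all $n$ sufficiently large, every denominator $(x_{n,k}-x_{n,j})^2$ is strictly positive, so $H(\overline{x}_n)$ is well defined and symmetric.

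Next I would carry out a Gershgorin/diagonal-dominance estimate at $\overline{x}_n$. The key cancellation is
\[
H_{kk}(\overline{x}_n)-\sum_{j\neq k}\bigl|H_{kj}(\overline{x}_n)\bigr|
=\sum_{\substack{i=1\\ i\neq k}}^{n}\frac{1}{(x_{n,k}-x_{n,i})^2}+\frac{\partial^2 V_n}{\partial\omega_k^2}(\overline{x}_n)-\sum_{j\neq k}\frac{1}{(x_{n,k}-x_{n,j})^2}
=\frac{\partial^2 V_n}{\partial\omega_k^2}(\overline{x}_n),
\]
which is strictly positive by hypothesis (2). Hence $H(\overline{x}_n)$ is a real symmetric matrix with positive diagonal entries and is strictly diagonally dominant.

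To finish, I invoke the standard consequence of Gershgorin's theorem: a real symmetric, strictly diagonally dominant matrix whose diagonal entries are positive is positive definite (every Gershgorin disk is contained in the open right half-plane). Combined with condition (1), which gives $\nabla E(\overline{x}_n)=0$, the second-derivative test then yields that $\overline{x}_n$ is a strict local minimum of $E$. There is no real obstacle here beyond the miraculous diagonal cancellation above; that identity is where all the work has already been done by the logarithmic form of the particle–particle interaction, and it is the only place where the specific structure of $E$ (as opposed to the hypothesis on $V_n$) enters the argument.
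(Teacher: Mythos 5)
Your proposal is correct and follows essentially the same route as the paper: both compute the Hessian of $E$ at $\overline{x}_n$, observe that the logarithmic pair interaction cancels so that the diagonal entry minus the absolute row sum of the off-diagonal entries equals $\partial^2 V_n/\partial\omega_k^2(\overline{x}_n)>0$, and conclude positive definiteness via Gershgorin's theorem. The paper phrases this as the row sums being positive rather than as strict diagonal dominance, but since the off-diagonal entries are negative these are the same statement.
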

\begin{proof}
The Hessian matrix of $E$ at $\overline{x}_n$ is given by
\begin{equation}\label{Hessian_M}
\nabla_{\overline{\omega}\,\overline{\omega}}^2 E(\overline{x}_n) = \begin{cases}
\dsty \frac{\partial^2 E}{\partial w_k\partial w_j}(\overline{x}_n)=-(x_k-x_j)^{-2}, & \text{if } \;k\neq j,\\
\dsty \frac{\partial^2 E}{\partial w_k^2}(\overline{x}_n)= \sum_{{{i=1} \atop {i \neq k}}}^{n} \frac{1}{(x_{n,k}-x_{n,i})^2}+\frac{\partial^2\left(V_n\right)}{\partial w_k^2}(\overline{x}_n), & \text{if } \;k = j.
\end{cases}
\end{equation}

 Note that  \eqref{Hessian_M} is a symmetric real matrix with negative values in the nondiagonal entries. Additionally, note that
$$
\sum_{{{j=1} \atop {i \neq k}}}^{n}\frac{\partial^2 E}{\partial w_k\partial w_j}(\overline{x}_n)+\dsty \frac{\partial^2 E}{\partial w_k^2}(\overline{x}_n) = \frac{\partial^2 V_n}{\partial w_k^2}(\overline{x}_n).
$$
Since this is positive, we conclude according to Gershgorin's theorem \cite{Horn90} (Theorem~6.1.1) that the eigenvalues of the Hessian are positive, and therefore, \eqref{Hessian_M} is positive definite. Combining this with the fact that $\nabla E(\overline{x}_n)=0$, we conclude that $\overline{x}_n$ is a local minimum of~\eqref{LogPotential}.
\end{proof}

The computations of the following examples have been performed  using the symbolic computer algebra system  \emph{Maxima} \cite{OchMak20}.
In all cases, we fixed $n=12$ and considered sequentially ordered Sobolev inner products  (see Definition \ref{Set-SOrdered-General} and  \mbox{Lemmas \ref{Th_ZerosSimp} and \ref{AsymCBounded_Lemma}}). From~\eqref{PtosCrit-Potencial}, it is obvious that $\nabla E(\overline{x}_{12})=0$, where $\overline{x}_{12}=  (x_{12,1},x_{12,2},\cdots,x_{12,n})$ and \mbox{$S_{12}(x_{12,k})=0$} for $k=1,\;2,\dots,\,12$. Under the above condition, $\overline{x}_{12}$ is a local minimum (maximum) of $E$ if the corresponding Hessian matrix at $\overline{x}_{12}$ is positive (negative) definite; in any other case, $\overline{x}_{12}$ is said to be a saddle point. We recall that a square matrix is positive (negative) definite if all its eigenvalues are positive (negative).

\begin{example}[Case in which the conditions of   Theorem \ref{PositiveHessian} are satisfied]\

\begin{enumerate}
  \item Jacobi-Sobolev inner product $\dsty \IpS{f}{g}=\int_{-1}^{1} f(x)g(x)(1+x)^{100}dx+f^{\prime}(2)g^{\prime}(2)$.
  \item Zeros of $S_{12}(x)$.
   \begin{align*}
     \overline{x}_{12}=& \left(0.44845, \, 0.563364, \, 0.653317, \, 0.728094, \, 0.791318, \, 0.844674, \right.  \\
       &\left.   0.889402, \, 0.925746, \, 0.954364, \, 0.97639, \, 0.989824, \, 0.998408 \right).
   \end{align*}
  \item Total potential of energy  $\dsty E(\overline{\omega}) =\sum_{1\leq k<j \leq 12 }\log{\frac{1}{|\omega_j-\omega_k|}} + F(\overline{\omega})+G(\overline{\omega})$,   where
  \begin{align*}
F(\overline{\omega})= &\frac{1}{2}\sum_{k=1}^{12} \left( \log{\frac{1}{\left| \omega_k-1 \right|} }+ \log{\frac{1}{\left| \omega_k+1 \right|^{101}} }+ \log{\frac{1}{\left| \omega_k-2 \right|^3} }\right),\; \\
 G(\overline{\omega})= & \frac{1}{2}\sum_{k=1}^{12} \log\left| (\omega_k - 1.04563) \tau(\omega_k)\right| \;
\text{ and }  \; \tau(x)  =  x^2-3.8812x+3.76606 >  0.
\end{align*}
  \item From \eqref{PtosCrit-Potencial}, $\dsty \frac{\partial  E}{\partial \omega_j}(\overline{x}_{12})=0, $ for $j=1,\dots, 12$.
  \item Computing the corresponding Hessian matrix at $\overline{x}_{12}$, we have that the approximate values of its eigenvalues  are
\begin{align*}
&   \left\{ 81.7737, \, 220.5813, \, 383.5185, \, 586.5056, \, 857.6819, \, 1248.8, \; 1857.7, \;2927.5, \;5039.9,\right. \\
&   \left.  \; 9986.6, \; 26185, \; 214620 \right\}.
\end{align*}
\end{enumerate}

 Thus, Theorem \ref{PositiveHessian} holds for this example, and we have the required local electrostatic equilibrium distribution.
\end{example}


\begin{example}[Case in which the conditions of  Theorem \ref{PositiveHessian} are satisfied]\

\begin{enumerate}
  \item Jacobi-Sobolev inner product $$\dsty \IpS{f}{g}=\int_{-1}^{1} f(x)g(x)(1+x)^{110}dx+f^{\prime}(1)g^{\prime}(1) + f^{\prime\prime}(2)g^{\prime\prime}(2).$$
  \item Zeros of $S_{12}(x)$.
   \begin{align*}
     \overline{x}_{12}=& \left(   0.482433,\,0.590159, \; 0.674139,\, 0.74379,\, 0.802629,\, 0.852355, \right.  \\
       &\left.  0.894142,\, 0.928255,\, 0.955716,\, 0.976239, \, 0.990307,\, 0.998211 \right).
   \end{align*}
  \item Total potential of energy  $\dsty E(\overline{\omega}) =\sum_{1\leq k<j \leq 12 }\log{\frac{1}{|\omega_j-\omega_k|}} + F(\overline{\omega})+G(\overline{\omega})$,   where
  \begin{align*}
F(\overline{\omega})= &\frac{1}{2}\sum_{k=1}^{12} \left( \log{\frac{1}{\left| \omega_k-1 \right|^{3}} }+ \log{\frac{1}{\left| \omega_k+1 \right|^{111}} }+ \log{\frac{1}{\left| \omega_k-2 \right|^4} }\right),\; \\
 G(\overline{\omega})= & \frac{1}{2}\sum_{k=1}^{12} \log\left|{(\omega_k -1.22268)(\omega_k -1.94089) \tau(\omega_k) }\right|\\
& \text{and } \; \tau(x)  =   x^2-3.8196x+3.65881>0.
\end{align*}
  \item From \eqref{CritPoint}, $\dsty \frac{\partial  E}{\partial \omega_j}(\overline{x}_{12})=0, $ for $j=1,\dots, 12$.
  \item Computing the corresponding Hessian matrix at $\overline{x}_{12}$, we have that the approximate values of its eigenvalues  are
\begin{align*}
&   \left\{102.3077,\; 265.8911,\;459.368,\;702.7009,\;1030.2,\;1504.8,\;2247.1,\;3563.2,\;6146,\right. \\
&   \left. 12806,\; 38783,\; 488410 \right\}.
\end{align*}
   \end{enumerate}

Thus, Theorem \ref{PositiveHessian} holds for this example, and we have the required local electrostatic equilibrium distribution.

\end{example}


\begin{example}[Case in which the conditions of   Theorem \ref{PositiveHessian} are not satisfied]\label{Examp_NoS}\

\begin{enumerate}
  \item Jacobi-Sobolev inner product $\dsty \IpS{f}{g}=\int_{-1}^{1} f(x)g(x)dx+f^{\prime}(2)g^{\prime}(2)$.
  \item Zeros of $S_{12}(x)$.
   \begin{align*}
     \overline{x}_{12}=& \left(-0.979635,\, -0.894154,\, -0.746211,\, -0.545446,\,-0.305098,\, -0.0412552,\right.  \\
       &\left. 0.227973,\, 0.483321,\, 0.705221,\, 0.87481,\, 0.975632,\, 2.1607 \right).
   \end{align*}
  \item Total potential of energy  $\dsty E(\overline{\omega}) =\sum_{1\leq k<j \leq 12 }\log{\frac{1}{|\omega_j-\omega_k|}} + F(\overline{\omega})+G(\overline{\omega})$,   where
      \begin{align*}
F(\overline{\omega})= &\frac{1}{2}\sum_{k=1}^{12} \left( \log{\frac{1}{\left| \omega_k-1 \right|} }+ \log{\frac{1}{\left| \omega_k+1 \right|} }+ \log{\frac{1}{\left| \omega_k-2 \right|^3} }\right),\\
 G(\overline{\omega})= & \frac{1}{2}\sum_{k=1}^{12} \log\left|{(\omega_k -2.12065) \tau(\omega_k) }\right|\;
\text{and } \; \tau(x)  =   {x}^{2}-3.74216\,x+3.51112>  0.
\end{align*}
  \item From \eqref{PtosCrit-Potencial}, $\dsty \frac{\partial  E}{\partial \omega_j}(\overline{x}_{12})=0, $ for $j=1,\dots, 12$.
  \item Computing the corresponding Hessian matrix at $\overline{x}_{12}$, we have that the approximate values of its eigenvalues  are
\begin{align*}
&   \left\{1388.3 , \, 975.7989, \,242.5338 , \, 179.5748, \,107.6368 , \, 86.754, \;70.7275, \,62.6406, \right. \\
&   \left. \,50.3046,\, 34.4135, \,14.0599 , \,  -258.3366\right\}.
\end{align*}
\end{enumerate}

 Then,  $\overline{x}_{12}$ is a saddle point  of $E(\overline{\omega})$.
\end{example}

\begin{remark}
As can be noticed, in some cases, the configuration given by the external field includes complex points; they correspond to $e_j$. Specifically, in the examples, these points are given as the zeros of $\tau(x)$. Since $\phi_{1,n}(x)$ is a polynomial of real coefficients, the nonreal zeros arise as complex conjugate pairs. Note that
$$
\frac{a}{x-z}+\frac{a}{x-\overline{z}} = a\frac{2x+2\Re{z}}{x^2+2\Re{z}+|z|^2}
$$
where $\Re z$ denotes the real part of $z$. The antiderivative of the previous expression is $a\ln(x^2+2\Re z+ |z|^2)$. This means in our current case that the presence of complex roots does not change the formulation of the energy function.
\end{remark}


\subsection*{What Happens If the Hessian Is Not Positive Definite?  A Case Study}

Theorem \ref{PositiveHessian}  gives  us a general condition to determine whether the electrostatic interpretation is a mere extension of the classical cases. However, in  Example \ref{Examp_NoS}, the Hessian has one negative eigenvalue of about $-258$ corresponding to the last variable $\omega_n$. Therefore, we do not have the nice interpretation given in Theorem \ref{PositiveHessian}. However, note that the rest of the eigenvalues are positive, which means that the number
$$\frac{\partial^2 (V_n)}{\partial w_k^2}(\overline{x}_n)$$
remains positive for $k=1,\ldots, 11$. In this case,  the potential function exhibits a saddle point. The presence of the saddle point is somehow justified by the attractor 
point $a \approx -2.121$ having a zero ( $x_{12,12} \approx 2.161$) in its neighborhood. In this case, we are able to give an interpretation of the position of the zeros by considering a problem of conditional extremes.

 Assume that, when checking the Hessian, we obtained that the eigenvalues $\lambda_{i}$, for $i \in \mathcal{E}\subset \lbrace 1,2,\ldots,n \rbrace$, are negative or zero. Without loss of generality, assume that this happens for the last $m_{\mathcal{E}}=|\mathcal{E}|$ variables.  This is a saddle point. However, the   rest of the eigenvalues are positive,  which means that the truncated Hessian $\nabla_{\omega_{m_{\mathcal{E}}}\omega_{m_{\mathcal{E}}}}^2 E$ formed by taking the first $n-m_{\mathcal{E}}$ rows and columns of $\nabla_{\overline{\omega}\,\overline\omega}^2 E_R$ is a positive definite matrix by the same arguments used in the proof of Theorem \ref{PositiveHessian}.

Let us define the following problem of conditional extremum  on $\overline \omega = \overline \omega_n\in \RR^n$

$$
\begin{aligned}
& \min_{\overline{\omega}_n\in \RR^n} E(\overline \omega_n)\\
&\text{subject to } \omega_k - x_k = 0, \; \text{for all }\;  k=n-m_{\mathcal E}+1,\ldots, n.
\end{aligned}
$$

Note that this problem is equivalent to solve

\begin{equation*}
\min_{\overline \omega_{n-m_\mathcal{E}} \in \RR^{n-m_\mathcal{E}}}  E_R(\overline \omega_{n-m_\mathcal{E}},x_{m_\mathcal{E}+1},\ldots,x_n).
\end{equation*}

Let us prove that $\overline x_{n-m_\mathcal{E}}$ is a minimum of this problem. Note that the gradient of this function corresponds to the first $n-m_\mathcal{E}$ conditions of \eqref{PtosCrit-Potencial}, and the second-order condition is given by the truncated Hessian $\nabla_{\omega_{m_{\mathcal{E}}}\omega_{m_{\mathcal{E}}}}^2 E(\overline x_{m_{\mathcal{E}}})$, which is  by hypothesis positive definite.

Therefore, the configuration $\overline{x}_n$ corresponds to the local equilibrium of the energy function \eqref{LogPotential} once $m_{\mathcal{E}}$ charges are fixed.

\end{document}